\documentclass[preprint,3p]{elsarticle}
\usepackage{stmaryrd}
\SetSymbolFont{stmry}{bold}{U}{stmry}{m}{n}
\usepackage{amsmath,bbm,mathbbol}
\usepackage{amssymb,amsfonts,amsthm}
\usepackage{tabularx,lmodern}
\usepackage{color,soul,enumitem}
\usepackage{epsfig,epstopdf,color}
\usepackage{hyperref,graphicx}
\usepackage{algorithm,algorithmic,multirow}
\usepackage{flushend}% For balanced columns
\usepackage{verbatim}
\usepackage{caption}
\usepackage{graphicx, subfig}
\usepackage{arydshln}
\usepackage[level]{datetime}
\usepackage{float}
\usepackage{booktabs}
\usepackage{tabularx}
\usepackage{natbib}
\usepackage{relsize}
\usepackage{ulem}
\usepackage{setspace}
\usepackage{epstopdf}
\usepackage{appendix}
\usepackage{bm}

\DeclareMathOperator\supp{supp}

%%%%%%%%%%%%%%%%% author macros %%%%w%%%%%%%%%
\newcommand{\be}{\begin{equation}}
\newcommand{\ee}{\end{equation}}
\newcommand{\ba}{\begin{array}}
\newcommand{\ea}{\end{array}}
\newcommand{\bea}{\begin{eqnarray}}
\newcommand{\eea}{\end{eqnarray}}
\newcommand{\beas}{\begin{eqnarray*}}
\newcommand{\eeas}{\end{eqnarray*}}

\newcommand{\bx}{{\textbf x}}
\newcommand{\by}{{\textbf y}}
\newcommand{\bz}{{\textbf z}}
\newcommand{\bn}{{\textbf n}}

\newcommand{\bp}{{\textbf p}}
\newcommand{\im}{\textrm i}

\newcommand{\bh}{{\textbf h}}

\newcommand{\bk}{{\textbf k}}
\newcommand{\bR}{{\textbf R}}

\renewcommand{\theequation}{\arabic{section}.\arabic{equation}} %
\newtheorem{exmp}{Example}
\newtheorem{remark}{Remark}[section]

\newtheorem{thm}{Theorem}
\newtheorem{lem}{Lemma}

%\newenvironment{proof}{{\noindent\it Proof}\quad}{\hfill $\square$\par}

 % inner command, used by \rchi

\begin{document}

\title{On optimal zero-padding of kernel truncation method}

\author[tju]{Xin Liu}
\ead{liuxin\_921@tju.edu.cn}

\author[scu]{Qinglin Tang}
\ead{qinglin\_tang@scu.edu.cn}

\author[tju]{Shaobo Zhang\corref{5}}
\ead{shaobo\_zhang@tju.edu.cn}

\author[tju]{Yong Zhang}
\ead{Zhang\_Yong@tju.edu.cn}

\address[tju]{Center for Applied Mathematics,Tianjin University,Tianjin 300072, China}
\address[scu]{College of Mathematics, SiChuan University, No.24 South Section 1, Yihuan Road, ChengDu, China, 610065}
\cortext[5]{Corresponding author.}

\begin{abstract}
The kernel truncation method (KTM) is a commonly-used algorithm to compute the convolution-type nonlocal potential
$\Phi(\bx)=(U\ast \rho)(\bx), ~\bx \in {\mathbb R^d}$,
where the convolution kernel $U(\bx)$ might be singular at the origin and/or far-field and the density $\rho(\bx)$ is smooth and fast-decaying.
In KTM, in order to capture the Fourier integrand's oscillations that is brought by the kernel truncation,
one needs to carry out a zero-padding of the density, which means a larger physical computation domain and a finer mesh in the Fourier space by duality.
The empirical \textit{fourfold} zero-padding [ Vico \textit{et al} J. Comput. Phys. (2016) ] puts a heavy burden on memory requirement
especially for higher dimension problems.
In this paper, we derive the optimal zero-padding factor, that is,  $\sqrt{d}+1$, for the first time together with a rigorous proof.
The memory cost is greatly reduced to a small fraction, i.e., $(\frac{\sqrt{d}+1}{4})^d$, of what is needed in the original fourfold algorithm.
For example, in the precomputation step, a double-precision computation on a $256^3$ grid requires a minimum $3.4$ Gb memory with the optimal \textbf{threefold} zero-padding,
while the fourfold algorithm requires around $8$ Gb where the reduction factor is $\frac{37}{64}\approx \frac{3}{5}$.
Then, we present the error estimates of the potential and density in $d$ dimension.
Next, we re-investigate the optimal zero-padding factor for the anisotropic density.
Finally, extensive numerical results are provided to confirm the accuracy, efficiency,
optimal zero-padding factor for the anisotropic density, together with some applications to different types of nonlocal potential, including
the 1D/2D/3D Poisson, 2D Coulomb, quasi-2D/3D Dipole-Dipole Interaction and 3D quadrupolar potential.
\end{abstract}

\begin{keyword}
convolution-type nonlocal potential, kernel truncation method, optimal zero-padding, error estimates, anisotropic density
\end{keyword}

\maketitle
%\tableofcontents

\section{Introduction}
In science and engineering community, nonlocal potentials, which are given by a convolution of a translational invariant Green's function with a
fast-decaying smooth function, are quite universe and common, e.g., the Newtonian potential in cosmology, the Poisson potential in electrostatics,
plasma physics and quantum physics, and are often used to account for long-range interaction.
In this article, we consider the following convolution-type nonlocal potential
\be\label{convolution}
\Phi(\bx)=[U \ast \rho](\bx) =\int_{\mathbb{R}^d} U(\bx-\by) \rho(\by) {\rm d} \by,\quad  \bx \in \mathbb{R}^d,
\ee
where $\ast$ is the convolution operator, $\bx = (x_1,\ldots, x_d)\in {\mathbb{R}^d}$, $ d = 1,2,3$ is the spatial dimension.
The potential can also be rewritten in Fourier integral
\be
\Phi(\bx)=\frac{1}{(2 \pi)^d} \int_{\mathbb{R}^d} \widehat{U}(\bk) \widehat{\rho}(\bk) e^{i \bk \cdot \bx} \rm{d} \bx,\quad  \bx \in \mathbb{R}^d,
\label{FourierForm}
\ee
where $\widehat{f}(\bk)= \int_{\mathbb{R}^d} f(\bx) e^{-i \bk \cdot \bx} {\rm d} \bx$ is the Fourier transform of function $f(\bx)$.
The density $\rho(\bx)$ is a fast-decaying smooth function and the convolution kernel $U(\bx)$ is usually singular at the origin and/or at the far field.
The Fourier transform of convolution kernel $\widehat U(\bk)$ is singular too, and sometimes the singularity becomes even stronger,
e.g., the 2D Poisson kernel $-\frac{1}{2\pi}\ln(|\bx|)$.

Since the density decays fast enough, it is reasonable to assume that the density is numerically compactly supported in a bounded domain
$\Omega \subset \mathbb R^d$, that is, $\supp \{\rho \} \subset \Omega$.
In computational practice, we first truncate the whole space into a rectangular domain, i.e., $\prod_{j=1}^d [-L_j,L_j]$,
and discretize it by $N_j\in 2\mathbb Z^{+}$ equally spaced points in the $j$-th direction, where the uniform mesh grid is
\begin{equation}
\mathcal T := \big\{ (x_{1}, x_{2},\ldots, x_{d}) \Big|~ x_{j} = \left(-\frac{N_j}{2},\ldots, \frac{N_j}{2}-1 \right)~\frac{2L_j}{N_j}, ~j = 1, \ldots, d.\big\}
\label{UniformMesh}
\end{equation}
The numerical problem is to compute the potential $\Phi$ on the mesh grid $\mathcal T$,
with discrete density values given on the same mesh grid, that is,
\bea
\{\rho(\bx_i)\}_{\bx_i \in \mathcal T} \longmapsto \{\Phi(\bx_i)\}_{\bx_i \in \mathcal T}.
\eea

As is well-known, the density function $\rho(\bx)$ is well approximated by Fourier spectral method with spectral accuracy, and it is implemented by discrete Fast Fourier Transform (FFT)\cite{ShenBook,NickBook}.
It is clear that direct discretization of either \eqref{convolution} or \eqref{FourierForm} shall encounter the singularity because the source and target grid coincides,
therefore, it requires special care to deal with the singularity in order to guarantee accuracy.
During the last ten years, there have been several fast spectral methods developed based on the Fourier spectral method, such as
the NonUniform Fast Fourier Transform method (NUFFT) \cite{NUFFT}, Gaussian-Summation method (GauSum)\cite{GauSum}, Kernel Truncation Method(KTM) \cite{FastConvGreengard}, and
Anisotropic Truncated Kernel Method (ATKM) \cite{atkm}, where the singularity is suitably treated with windowed function or kernel truncation technique in physical/Fourier space.
All these methods are spectrally accurate and efficient with a $\mathcal{O}(N_{\rm tot} \log(N_{\rm tot} ))$ complexity, where $N_{\rm tot} :=\!\prod_{j=1}^d\! N_j$ is total number of grid points.
Among them, KTM is the most simple one and has been widely adopted by the physics community to compute the nonlocal potentials\cite{Bogoliubov, CoulombCutoff}.

To compute the potential $\Phi(\bx)$ inside domain $\Omega$, the very basic idea of KTM
is to screen unnecessary interactions at the faraway distance, which leads to an effective truncated kernel $U_D(\bx)= U(\bx) \chi_D(\bx)$ with $\chi_D(\bx)$ being
the characteristic function of domain $D$,
and then to compute the new convolution $(U_D \ast \rho)$ since it coincides with the original potential inside domain of interest,
i.e.,
\bea
\Phi(\bx) = \left[U_D\ast \rho\right] (\bx),\quad ~\forall~ \bx \in \Omega~.\eea
In KTM, we choose to cut off interactions outside a large circular domain $D\subset \mathbb R^d$ by simply setting it to zero, i.e., $U(\bx)= 0$ for $\bx \notin D$,
then we apply trapezoidal rule to the resulted Fourier integral
\bea\label{FourKTM}
\Phi(\bx) = \frac{1}{(2 \pi)^d} \int_{\mathbb R^d} \widehat U_D(\bk) ~ \widehat \rho(\bk) ~ e^{i \bk \bx} ~{\rm d}\bk, \quad \bx \in \Omega,
\eea
where the Fourier integrand is smooth but oscillatory due to the Paley-Wiener theorem \cite{Paley-Wiener}.
To resolve the unpleasant oscillations caused by the kernel truncation, it requires a fine mesh in the Fourier space, or equivalently a larger physical domain.

The most natural way is to extend the density to a larger domain $S\Omega = \prod_{j=1}^d [-SL_j, SL_j]$ by zero padding,
that is, setting $\rho(\bx) = 0, \forall~ \bx\in S \Omega \setminus \Omega$. The constant $S > 1$ is called the zero-padding factor hereafter.
By duality, the Fourier mesh size is scaled down by a factor of $1/S$, and it helps to provide a better approximation so to capture the integrand's oscillations.
On the other hand, larger extension requires more storage and computational efforts and it inevitably results in a poorer performance.
Therefore, the optimal zero-padding factor is of significant importance to achieve better accuracy with the most economic efforts,
especially in high dimension.

The first result on the zero-padding factor was given by F. Vico et al in \cite{FastConvGreengard},
where the authors claimed a \textsl{fourfold} zero-padding is sufficient for a machine-precision computation.
With such fourfold padding, a typical double-precision computation on $256^3$ grid requires a minimum $8$ Gb just to store the padded density,
not to mention the auxiliary variables whose size is usually even larger,
therefore, potential evaluation of such size or any larger size seems to be an impossible task on personal computer.
Fortunately, using a similar periodicity argument as in ATKM \cite{atkm}, we can prove that
a \textbf{threefold}, instead of \textsl{fourfold}, zero-padding is \textbf{sufficient} to guarantee machine-precision accuracy in 3D,
which immediately helps reduce memory cost by a factor around $60\%$, thus make the laptop computation possible.

Another important situation is the anisotropic density case where $\rho$ is (numerically) compacted supported in an anisotropic domain.
The KTM applies readily with ease, but the memory requirement and computational costs both scale linearly with the anisotropy strength,
which makes it prohibitively expensive for strongly anisotropic density especially in high dimension.
In simulation, the potentials are usually called multiple times with the same numerical setups, for example,
in the computation  of ground state and dynamics of the nonlocal Schr\"{o}dinger equation\cite{pcg-dbec,GS_dynamic},
then it is worth the efforts to precompute any possible quantities once for all for sake of efficiency.

Numerically, the Fourier integral \eqref{FourKTM} is discretized by applying trapezoidal rule on a uniform mesh grid, and the resulted summation is accelerated by FFT/iFFT.
As point out in \cite{FastConvGreengard}, the trapezoidal rule boils down to a discrete convolution with grid density of length $N$,
regardless of the anisotropy strength and space dimension, and such discrete convolution can be accelerated by FFT on a doubly padded density.
The convolution tensor $T$, an accurate approximation of the interaction kernel, is determined by taking inverse discrete Fourier transform on truncated kernel's Fourier transform, and its analytical
expression is explicitly given by Eqn.\eqref{Tensor_Aniso} in Section \eqref{Sect-Tensor}.
Once the convolution tensor $T$ is available, the potential is computed with a pair of FFT/iFFT on a double-sized vector of length $2^d N_{\rm tot} $.
For isotropic density, the optimal zero-padding factor helps reduce the memory costs by a factor of $1-(\frac{S}{4})^d$,
which is quite substantial in high dimension, and alleviate the heavy burden on memory requirement and computational cost greatly.

\

The paper is organized as follows. In Section \ref{OptimalZeroPad},
firstly, we derive and prove the optimal zero-padding of the density in 1D/2D/3D.
Then, we re-investigate the anisotropic density case. Thirdly, we present the discrete convolution structure for isotropic and anisotropic cases,
together with an explicit formulation of the convolution tensor and a doubly FFT acceleration in subsection \ref{Sect-Tensor}.
Lastly, we give an rigorous error estimates for both the nonlocal potential in subsection \ref{Sec:ErrEst} and the density in \ref{AppFourier}.
Extensive numerical results are shown in Section \ref{NumericalResults} to illustrate the performance of our method in terms of accuracy and efficiency.
Finally, some conclusions are drawn in Section \ref{conclusions}.

\section{The optimal zero-padding } \label{OptimalZeroPad}
\setcounter{equation}{0}
In this section, we first focus on computation of the nonlocal potential generated by isotropic density, and discuss the anisotropic case later.
For sake of simplicity, we choose a square domain, i.e., $\bR_L = \prod_{j = 1}^d [-L,L]$, which is discretized with $N$ equally spaced grid points in each spatial direction,
and denote the mesh grid \ref{UniformMesh} as $\mathcal T_N$.
The density $\rho(\bx)$ is well approximated by the Fourier spectral method within spectral accuracy inside $\bR_L$ \cite{ShenBook,NickBook}.

The nonlocal potential \eqref{convolution} can be reformulated as follows
\bea
\Phi(\bx) \label{KTM_1}
&=&\int_{\mathbb{R}^d} U(\bx-\by) \rho(\by){\rm  d }\by
=\int_{\textbf{R}_L } U(\bx-\by) \rho(\by){ \rm d }\by, \\
&=& \int_{\bx+ \textbf{R}_{ L}} U (\by) \rho(\bx- \by){ \rm d} \by. \label{KTM_2}
\eea
As the density is compactly supported in $\bR_L$, Eqn.\eqref{KTM_2} is equivalent to the following convolution
\bea\label{KTM}
\Phi(\bx) =
\int_{\mathbf{B}_G} U (\by) \rho(\bx- \by) { \rm d } \by  , \quad \quad \bx \in \textbf{R}_L,
\eea
where ${\mathbf{B}_G}$ is a ball centered at the origin with radius $G:=\max_{\bx,\by\in \bR_L}|\bx-\by| = 2 \sqrt{d} L $ being the diameter of $\bR_L$.
The above equation holds because for any $\bx \in \bR_L, \by \in \mathbf{B}_G\backslash (\bx+ \textbf{R}_L)$, we have
\begin{equation}
   \bx-\by \notin \bR_L ~~\longrightarrow~~\rho(\bx-\by) = 0 ~~\Longrightarrow ~~ \int_{\mathbf B_G\backslash (\bx+\bR_L)} U(\by) \rho(\bx-\by)  {\rm d} \by = 0.
\end{equation}
To integrate Eqn.\eqref{KTM}, one needs to approximate the density $\rho(\bx)$ on a larger domain $\bR_{(2\sqrt{d}+1) L}$,
because for any $ \bx\in \bR_L$ and $\by \in \mathbf{B}_G$ we have
\begin{equation}
   \bx-\by \in \bR_{(2\sqrt{d}+1) L}.
\end{equation}
A natural way is to extend the density
to $\bR_{(2\sqrt{d}+1)L}$ by zero-padding and apply the Fourier spectral method then. However, this is not the most economic way.
As is known, the Fourier series can be simultaneously extended periodically to the whole space,
therefore, the zero-padded domain does not necessarily need to cover the concerned domain directly.

\

For the density, the finite Fourier series approximation on $\bR_{SL}$, implemented right after the $S$-fold zero-padding, together with its periodic extension,
guarantees a spectral approximation within domain $\bR_{(2S-1) L}$.
To make it clear, we present a graphical illustration in Figure \eqref{periodicExtension} to show that the twofold(left)
and threefold(right) zero-padding of the density (blue-solid line) and the periodic extensions (green-dashed line), where we take the length $L = 1$ for simplicity.
Let us take the 1D case as an example. From Figure \ref{periodicExtension}, it is clear that the periodic extension of a twofold zero-padding
coincides with the density within $\bR_{(2\times 2 -1) L} = \bR_{3L}$.
While, without zero-padding, periodic extension of the Fourier series approximation, that is done over the original interval $\bR_L$,
mismatches the density $\rho(\bx)$ in $[L,3L]$ with an non-negligible error.
Then we conclude that a \textbf{twofold} zero-padding suffices to guarantee a spectral approximation in 1D.
The same argument applies to other spatial direction, therefore, without difficulty, we can see that \textbf{threefold} zero-padding suffices
spectral approximation in the 2D and 3D case.

\

In fact, once the following condition \bea
2 \sqrt{d}+1\le 2S-1 \Longrightarrow  \sqrt{d}+1 \le S \eea
is satisfied, the $S$-fold zero-padding is both \textbf{sufficient} and \textbf{necessary}, therefore,
we derive the optimal zero-padding factor as
\begin{equation} \label{OptZeroPad}
  \scalebox{1.2}{\boxed{ \quad \quad  S = \sqrt{d} + 1.\quad \quad }}
\end{equation}
In practice, it is convenient to choose
\begin{equation} \label{OptZeroPad:Num}
  S = \lceil \sqrt{d} + 1\rceil =
         \left\{\begin{array}{ll}
               2, & d =1,\\
      3, & d =2, 3, \end{array}\right.
\end{equation}
where $ \lceil ~ \rceil$ is rounding up to the nearest integer.

\begin{remark}
The periodic extension observation holds in each spatial dimension, therefore, conclusions on optimal zero-padding factor holds true
for the general rectangular domains and different grid point numbers, that is, the spatial extend $[-L_j, L_j]$ and grid number $N_j$ along the $j$-th spatial direction
are not necessarily the same.
\end{remark}

\begin{remark}
In practice, for sake of a even better efficiency, we can also consider a fractional zero-padding, e.g., $S = \frac{5}{2}$, as long as $N_j S \in 2\mathbb Z^{+}$
with $j=1, 2, 3.$ %= x $ in 1D, $j%\in \{x,y\}$ in 2D and $\alpha \in \{x,y,z\}$ in 3D.
 Take the 2D case as an example, the analytical optimal zero-padding factor $\sqrt{2}+1 \approx 2.4$ is close to $2.5$ but smaller than $\lceil\!\sqrt{2}\!+\!1\!\rceil=3$,
we may choose a $2.5$-fold zero-padding for a smaller memory cost and better efficiency.
\end{remark}

\begin{remark}
In standard KTM \cite{CoulombCutoff,FastConvGreengard}, one chooses to integrate the corresponding Fourier integral of \eqref{KTM} by applying the trapezoidal rule quadrature,
but it was not clear about how to choose the optimal mesh size so as to capture the integrand's oscillations.
For the first time, we derive the optimal zero-padding factor and point out that the typical \textbf{fourfold} zero-padding is only sufficient but not necessary.
The optimal zero-padding factor \eqref{OptZeroPad} helps reduce the memory cost by a factor $1-(\frac{S}{4})^d$, and the memory reduction is quite significant in higher space dimensions.
For example, in the precomputation step of \eqref{FourTransTrunKer}, a double-precision computation on a $256^3$ grid requires a minimum $3.4$ Gb memory with the optimal \textsl{threefold} zero-padding
while the fourfold original algorithm requires around $8$ Gb,  with the reduction factor being around $60\%$.

\end{remark}

\begin{remark}
The isotropic kernel truncation
facilitates the computation of the corresponding Fourier transform, which is exactly what the classical KTM adopted\cite{CoulombCutoff,FastConvGreengard}.
It is worthy to point out that the isotropic kernel truncation is not optimal in terms of efficiency, simply because the geometry of the computation domain is not taken into account.
Anisotropic kernel extension to a larger rectangular domain produces better efficiency, and we refer the readers to \cite{atkm} for more details.
\end{remark}

\begin{figure}%[ht]
\centering
\includegraphics[scale=0.55]{./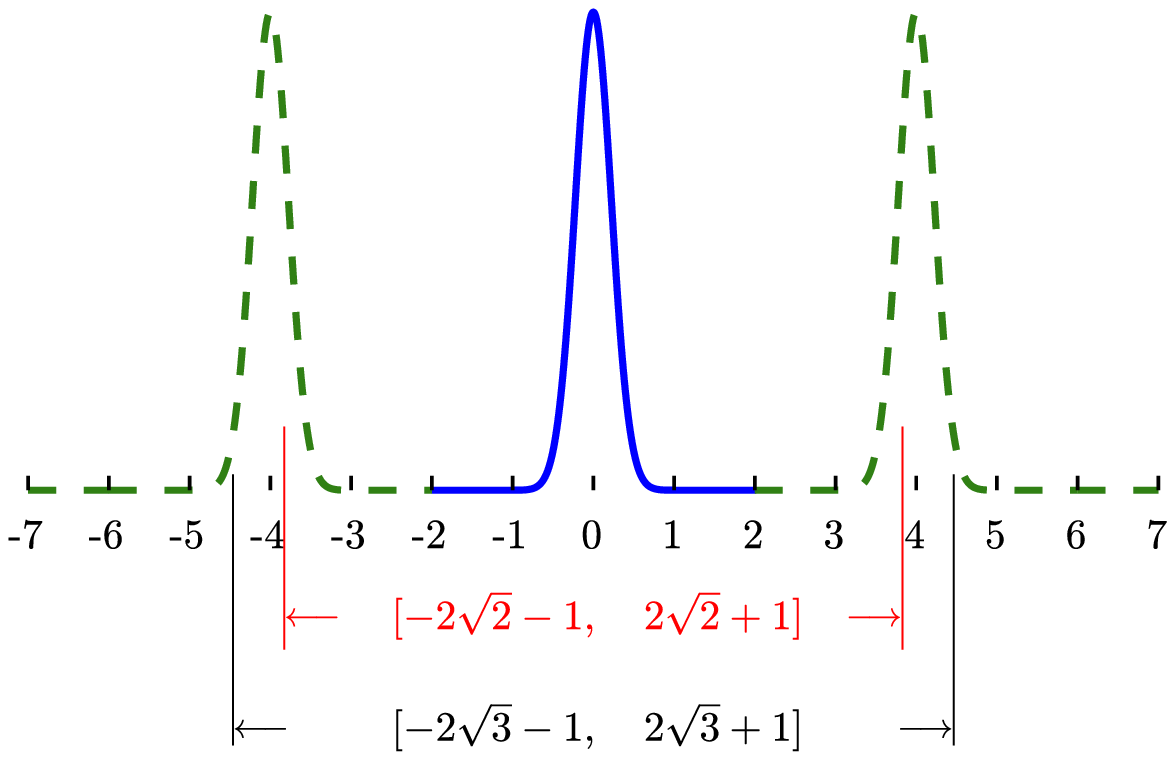}
\includegraphics[scale=0.55]{./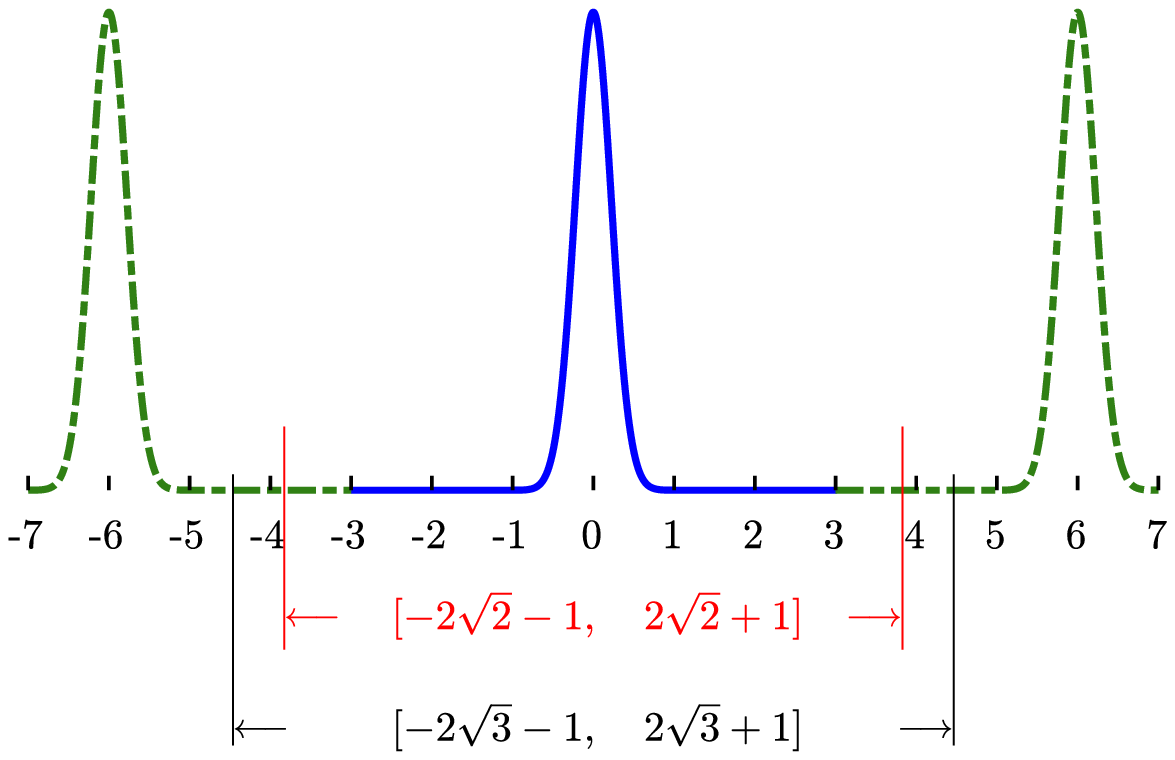}
\caption{Periodic extensions of the twofold (left) and threefold (right) zero-padding for the density. Here $L =1$ for simplicity.}
\label{periodicExtension}
\end{figure}

On domain $\bR_{SL}$, the density $\rho$ is well resolved by the following finite Fourier series
\be
\rho(\bz) \approx \sum_{\bk\in \Lambda} ~\widehat{\rho}_{\bk}~  e^{i \bk \cdot \bz},  \quad \quad  \bz \in \textbf{R}_{S L},
\label{rhoApprox}
\ee
where $\Lambda =\{ \bk:=\frac{ \pi}{S L }(k_1,\dots, k_d) \in \frac{ \pi}{SL} \mathbb Z^d~~\big| k_j= -\frac{SN}{2},\dots, \frac{SN}{2}-1, j = 1,\dots,d\}$ is the mesh grid in Fourier space.
The Fourier coefficients are given as follows
\be
\widehat{\rho}_{\bk} = \frac{1}{(2SL)^{d}} \int_{\textbf{R}_{S L}} \rho(\bz) e^{-i \bk \cdot \bz} { \rm d } \bz,\quad \quad \bk \in \Lambda.
\label{rho_hat}
\ee
Such integral is well approximated by applying the trapezoidal rule, and the resulted summation, which is given explicitly as follows:
\bea\label{rhoTildeK}
\widetilde \rho_\bk  = \frac{1}{(S N)^d} \sum_{\bz_\bp\in \mathcal T_{SN}}
\rho(\bz_\bp) e^{-i\bz_\bp\cdot \bk}, \quad ~\bk \in \Lambda,
\eea
is accelerated by discrete Fast Fourier Transform(FFT) within ${\mathcal O}( d (SN)^d \log(SN) )$ float operations\cite{ShenBook,NickBook}.
Then we obtain the following finite Fourier series
 \be \label{DisFourierSeries}
\rho_{N}(\bz) := \sum_{\bk\in \Lambda} \widetilde{\rho}_{\bk} ~  e^{i \bk \cdot \bz},  \quad ~\bz \in \textbf{R}_{S L},
\ee which is an interpolation on $\mathcal T_{SN}$ and a spectral approximation of $\rho(\bx)$ over $\bR_{SL}$.
As is shown earlier, the periodic extension of $\rho_N(\bz)$ is also a spectral approximation over $\bR_{(2\sqrt{d}+1)L}$, therefore, after
substituting $\rho_N$ for $\rho$ in \eqref{KTM}, we obtain
\begin{eqnarray}
\Phi(\bx)&\approx&\int_{\mathbf{B}_G} U (\by) \rho_{N}(\bx- \by) { \rm d} \by, \nonumber  \\
&= & \sum_{\bk\in \Lambda} \left(\int_{\mathbf{B}_G}~ U(\by) e^{-i \bk \cdot \by} { \rm d} \by \right) ~\widetilde{\rho}_{\bk} ~e^{i \bk \cdot \bx} , \notag \\
&:=& \sum_{\bk\in \Lambda} ~\widehat{U_G}(\bk) ~\widetilde{\rho}_{\bk} ~e^{i \bk \cdot \bx} := \Phi_{N}(\bx), \quad \bx \in \bR_L \label{FormAlg},
   \label{phi_num}
\end{eqnarray}
where $\widehat{U_G}(\bk)$, Fourier transform of the truncated kernel, is defined as
\begin{equation}
   \widehat{U_G}(\bk):= \int_{\mathbf{B}_G} U(\by) ~e^{-i \bk \cdot \by} { \rm d } \by.
   \label{FourTransTrunKer}
\end{equation}
The above approximation of $\Phi$ is spectrally accurate, and its evaluation on target grid $\mathcal T_N$ are accelerated by FFT with great efficiency.

The truncated kernel's Fourier transform is computed once for all and can be treated as a precomputation.
For symmetric kernels, i.e., $U(\bx) = U(r)$ with $r=|\bx|$, the Fourier transform will be reduced to one-dimensional integral as follows\cite{FastConvGreengard}
\begin{equation}
\widehat{U_G}(\bk)=
         \left\{\begin{array}{ll}
               2 \pi \int_{0}^{G} J_0(k r) U(r) r ~{ \rm d } r, & d = 2,\\[0.5em]
               4 \pi \int_{0}^{G} \frac{\sin{(k r)}}{k r}  U(r) r^2~ { \rm d} r,  & d= 3,
         \end{array}\right.
   \label{Fourier-Symmetric}
\end{equation}
where $J_0$ is the Bessel function of first kind with index $0$.
For the most common classical kernels, e.g., the Poisson and Coulomb kernels, Eqn.\eqref{Fourier-Symmetric} has analytical expressions and
we refer to \cite{pcg-dbec,FastConvGreengard} for more details. For general kernels, one may resort to numerical integration, e.g., the Gauss-Kronrod quadrature \cite{pcg-dbec},
or some high-precision library such as the Advanpix toolbox \cite{advanpix}, to obtain accurate evaluation of $\widehat U_G(\bk)$.

Once $\widehat{U_G}(\bk)$ is available, the calculation of $\Phi$ consists of four steps and is summarized in the following algorithm.
\begin{algorithm}
\caption{Fast computation of $\Phi$ on $\mathcal T_N$ in Eqn.\eqref{FormAlg} by Kernel Truncation Method.}
\label{alg:ktm:descri}
\begin{algorithmic}[1]
\REQUIRE Precompute the Fourier transform of the truncated kernel $\widehat{U_G}(\bk)$.
\STATE Extend the density $\rho(\bx)$ to $\bR_{SL}$ by zero-padding.\\[0.3em]
\STATE Compute $\widetilde \rho_\bk$ in Eqn.\eqref{rhoTildeK} via FFT.\\[0.3em]
\STATE Compute $\widetilde \rho_\bk \widehat{U_G}(\bk)$ by pointwise multiplication.
\STATE Compute $\Phi$ in Eqn.\eqref{FormAlg} on mesh grid $\mathcal T_N$ via iFFT.
\end{algorithmic}
\end{algorithm}

\subsection{Anisotropic density }\label{AnisotropicCase}
In this subsection, we study the optimal zero-padding factor for the anisotropic density case,
a situation that is frequently encountered in lower-dimensional confined quantum systems \cite{GS_dynamic, DimRdct}.
We assume that the density is compactly supported in an anisotropic rectangle
\bea
\textbf{R}_{L}^{\bm{\gamma}}:= \prod_{j=1}^d  [-L \gamma_j, L \gamma_j ], \eea
which is also domain of interest, with the anisotropy vector $\bm{\gamma}=(\gamma_1, \dots, \gamma_d)\in \mathbb R^d$.
Without loss of generality,
we take $\gamma_1=1$ and $0<\gamma_j \le 1$ for $j=2, \dots, d$ and define the anisotropy strength as $$\gamma_f := \prod_{j=1}^d \gamma_j^{-1}.$$
A smaller $\gamma_j$ implies a shorter extend of the density $\rho(\bx)$ in the $j$-th direction, and leads to a stronger anisotropy strength $\gamma_f$.
The density is sampled on a uniform mesh grid with the same number of grid points in each spatial direction (denoted by $N$).
Similar as \eqref{KTM}, we have
\bea
\Phi(\bx)= \int_{\mathbf{B}_G} U (\by) \rho(\bx- \by) { \rm d} \by  , \quad \quad \bx \in \textbf{R}_{L}^{\bm{\gamma}},
\label{KTM_anis}
\eea
where the radius $G = 2 L\sqrt{1+\gamma_2^2+ \cdots \gamma_d^2}$ is also the diameter of $\bR_{L}^{\bm \gamma}$.
To integrate \eqref{KTM_anis}, we need to approximate the density on a large domain, because for any $\bx \in\bR_{L}^{\bm \gamma}$ and $\by \in \mathbf B_G$ we have
\begin{equation}
   \bx-\by \in \widetilde \bR := \{\bz \in \mathbb R^{d} ~\big| ~|z_j|\le G + L\gamma_j, j = 1,\ldots, d \} .
\end{equation}
We choose to apply the Fourier spectral method on $\bR_{L\bm{S}}^{\bm \gamma}:= \prod_{j = 1}^d [-S_j L\gamma_j, S_j L\gamma_j] $,
where $\bm{S} = (S_1,\ldots, S_d )\in \mathbb R^d$ with $S_j\geq 1$ being the zero-padding factor in the
$j$-th spatial direction, so to obtain a spectral approximation of the density on $\widetilde \bR$.
Similarly, once the following condition
\begin{equation}
   G + L \gamma_j \le (2 S_j -1) L \gamma_j  \quad \Longrightarrow \quad 1 +  \gamma_j^{-1} \sqrt{1+\gamma_2^2+\ldots+ \gamma_d^{2}} \le S_j,
\end{equation}
is satisfied, we derive the optimal factor as follows
\begin{equation}
   \scalebox{1.2}{\boxed{ \quad \quad  S_j = 1  + \gamma_j^{-1}\sqrt{1+\gamma_2^2+\ldots+ \gamma_d^{2}},~~ j = 1,\ldots, d. }}
\end{equation}
The above factor will be reduced to \eqref{OptZeroPad} if the anisotropic strength is one, i.e., $\gamma_f = 1$.
It is clear that the optimal zero-padding factor may be different along each spatial direction for the anisotropic density case.
In practice, the optimal factor along the $j$-th direction is $\lceil S_j \rceil$,
and the minimum memory cost is $\prod_{j =1}^{d} \lceil S_j\rceil$ times that of the original density.
Take the 2D case for example, when the anisotropy strength $\gamma_f$ gets stronger, we have
\begin{equation}
   S_1 =  1+\sqrt{1+\gamma_2^2} \approx 2 + \frac{\gamma_2^2}{2}, \quad
S_2 =  1+ \gamma_2^{-1}\sqrt{1+\gamma_2^2} \approx \frac{1}{\gamma_2} +1+\frac{\gamma_2}{2},\quad \gamma_2 \rightarrow 0.
\end{equation}
Therefore, the minimum memory cost scales linearly with respect to the anisotropy strength as $2 ( 1+ \gamma_f)$. The stronger anisotropy strength, the more
memory storage it requires.

\

Similarly, after plugging in the density's Fourier series's approximation, we obtain
\bea
\Phi(\bx) \approx \sum_{\bk} ~~\widehat{U_G}(\bk) ~\widehat{\rho}_{\bk} ~e^{i \bk \cdot \bx}, \quad \quad \bx \in \bR_{L}^{\bm \gamma},
\label{phi_num_anis}
\eea
where $\bk = \frac{\pi}{L}~(\frac{k_1}{S_1}, \dots, \frac{k_d}{S_d}), k_j=-S_j N/2, \dots, S_j N/2-1$.
The density's Fourier coefficients
\beas
\widehat{\rho}_{\bk} = \frac{1}{|\bR_{L \bm {S}}^{\bm \gamma}|} \int_{\bR_{L \bm {S}}^{\bm \gamma}} \rho(\bz) e^{-i \bk \cdot \bz} { \rm d} \bz,
\eeas
is well resolved by applying trapezoidal rule on uniform mesh grid, and the resulted summation can be accelerated by FFT.

\subsection{Tensor acceleration}\label{Sect-Tensor}

As pointed out in \cite{FastConvGreengard}, the above algorithm can be rewritten as
a discrete convolution of a tensor and density's grid values.
The tensor $T$ is actually the inverse discrete Fourier transform of vector $\{\widehat {U_G}(\bk), \bk \in \Lambda\}\in \mathbb C^{(SN)^{d}}$.
With a change of index, it is easy to check that the discrete convolution structure holds true for both symmetric and non-symmetric kernels.

To be exact, let us take the 2D case as an example. Define the index set
\begin{equation}
\mathcal{I}_N= \left\{ (n, m) \in \mathbb{Z}^2 | -N/2 \le n \le N/2-1, -N/2 \le m \le N/2-1  \right\}.
\end{equation}
The Fourier transform of the density is approximated as follows
\bea
\widetilde{\rho}(\bk_{pq}) = \frac{1}{(S N)^2} \sum \limits_{(n', m') \in \
\mathcal{I}_{ N}} \rho(x_{n'}, y_{m'})e^{-\frac{2\pi i}{S N}(p n' + q m')}, \quad \bk_{pq} \in \Lambda.
\label{rho_hat_discrete}
\eea
Plugging \eqref{rho_hat_discrete} into Eqn.\eqref{phi_num} and switching the summation order, we know that
the discrete potential $\Phi $ on a uniform grid can be rewritten as the following discrete convolution
\bea
\Phi_{n, m}
& := &  \sum_{(n', m') \in \mathcal{I}_N} T_{n-n', m-m'}~\rho_{n', m'}.
\label{discreteconvolution}
\eea
The tensor $T_{n, m}$ is given explicitly as
\bea
T_{n, m}  =  \frac{1}{ (S N)^2 }  \sum \limits_{(p, q) \in \mathcal{I}_{S N}}
\widehat{U_G} \left(\frac{\pi p}{S L}, \frac{\pi q}{S L}\right)  e^{\frac{2\pi i}{S N} (p n+q m)}, \quad (n,m)\in \mathcal{I}_{2N},
\eea and it can be computed by applying inverse discrete Fourier transform on vector $\{\widehat{U_G}(\bk),~\bk\in \Lambda\}$.

Following the same procedure, we derive the very similar convolution structure for the anisotropic density case. The discrete convolution tensor $T_{n, m}$ reads as follows
\begin{equation}\label{Tensor_Aniso}
   T_{n, m}  =  \frac{1}{ S_1 S_2 N^{2} }  \sum \limits_{p=-S_1 N/2}^{S_1 N/2-1}~ \sum \limits_{q=-S_2 N/2}^{S_2 N/2-1}
   \widehat{U_G} \left(\frac{\pi p }{S_1 L \gamma_1}, \frac{\pi q }{S_2 L \gamma_2}\right)~e^{\frac{2\pi i}{ N} (\frac{p n}{S_1}+\frac{q m}{S_2})},
\end{equation}
and it can be computed by inverse FFT within $\mathcal O(S_1 S_2 N^2 \log( S_1 S_2 N^2))$ float operations.

\

Next, we present details on how to accelerate the discrete convolution via FFT/iFFT by using the 1D example. Extension to 2D case is quite straightforward.
The convolution is given below
\bea
\Phi_k
= \sum_{j=-N/2}^{N/2-1}~ T_{k-j}~f_{j}, \qquad k = - \frac{N}{2},  \dots, \frac{N}{2}-1,
\label{discreteconvolution1D}
\eea
and with a simple index change $k\rightarrow k + (1 + N/2) ,~ j \rightarrow j + (1 + N/2)$, it is reformulated as
\bea
\Phi_k
 =   \sum_{j=1}^{N} T_{k-j}~f_{j}, \qquad k = 1, \dots, N.
\label{discreteconvolution1DEqu}
\eea
For convenience, we set $T_{-N} = 0$ and the summation \eqref{discreteconvolution1DEqu} remains unchanged. Then we have
\begin{equation}\label{FourierT1D}
T_{j} = \frac{1}{2N} \sum_{k=1}^{2 N} \widehat{T}_{k} ~ e^{\frac{i 2 \pi  j (k-1) }{2N}}, \qquad j=-N,\dots, N-1,
\end{equation} with
\beas
 \widehat{T}_{k}=
  \sum_{j=-N}^{N-1} T_{j} ~ e^{-\frac{i 2 \pi  j (k-1) }{2N}}
  =\sum_{j=0}^{N-1} T_{j} ~ e^{-\frac{i2 \pi  j (k-1) }{2N}} + \sum_{j=N}^{2N-1 } T_{j-2N}~ e^{-\frac{i 2 \pi  j (k-1) }{2N}}.
\eeas
In fact, the Fourier transform vector $\widehat T:= [\widehat T_1, \ldots,\widehat T_{2N}]$ is just the discrete Fast Fourier transform of vector
$\widetilde{T} = [T_0, T_1, \dots, T_{N-1}, T_{-N},  \dots, T_{-1} ]$.
Substituting \eqref{FourierT1D}  into \eqref{discreteconvolution1DEqu} and switching the summation order, we have
 \beas
 \Phi_k &=& \sum_{j=1}^{N}  \left[ \frac{1}{2N} \sum_{p=1}^{2N}
 \widehat{T}_{p} ~ e^{\frac{2 \pi i (k-j) (p-1) }{2N}}\right] f_{j}
= \frac{1}{2 N} \sum_{p=1}^{2N}  \widehat{T}_{p} ~ e^{\frac{2 \pi i (k-1) (p-1) }{2N}} \left[\sum_{j=1}^{N} f_{j} ~ e^{-\frac{2 \pi i (j-1) (p-1)  }{2N}}\right] \\
 &:=& \frac{1}{2 N} \sum_{p=1}^{2N}  \widehat{T}_{p} ~\widehat{F}_{p} ~ e^{\frac{2 \pi i (k-1) (p-1) }{2N}}, \qquad  k = 1, \dots, N,
 \eeas
with $$\widehat{F}_{p} :=\sum_{j=1}^{N} f_{j} ~ e^{-\frac{2 \pi i (j-1) (p-1)  }{2N}} =\sum_{j=1}^{2N} F_{j} ~ e^{-\frac{2 \pi i (j-1) (p-1)  }{2N}}, \quad
p=1,\ldots, 2N,$$
being the discrete Fourier transform of $F=[f_1, \dots, f_N, 0, \dots,0] \in \mathbb R^{2N}$.
Obviously, in the last step, $\Phi_k $ can be computed by applying iFFT on $\{\widehat{T}_{p}\widehat{F}_{p}\}_{p=1}^{2N}$ within $\mathcal O(2N \log(2N))$ flops.

To summarize, we present detailed step-by-step algorithm proposed in Algorithm \ref{FastDisCon} using the 1D case.
 \begin{algorithm}
\caption{FFT-acceleration for discrete convolution $\Phi_{k}$\eqref{discreteconvolution1DEqu} }
\label{FastDisCon}
\begin{algorithmic}[1]
  \REQUIRE Set $T_{-N}=0$ and compute $\widehat{T}:=\textrm{FFT}(\widetilde{T}) $ with $\widetilde{T}=[T_0, T_1, \dots, T_{N-1}, T_{-N},  \dots, T_{-1} ]$.\\[0.3em]
\STATE Compute $\widehat{F}:= \textrm{FFT}(F)$ with $F=[f_1, \dots, f_N, 0, \dots,0]\in \mathbb{R}^{2N}$.\\[0.3em]
\STATE Compute $ \widehat{\Phi}:=\widehat{T} \widehat{F}$  by pointwise multiplication.\\[0.3em]
\STATE Compute $\widetilde{\Phi}= \textrm{iFFT}(\widehat{\Phi})$, and set $\Phi = \widetilde{\Phi}(1:N)$.
\end{algorithmic}
\end{algorithm}

The discrete convolution can be implemented with a pair of FFT and inverse FFT for a zero-padded vector of length $(2N)^{d}$.
Such acceleration is purely algebraic because the matrix associated with FFT/iFFT happens to be eigenvectors of the discrete tensor $T$,
and we refer the reader to \cite{atkm} for more details.
Once the convolution tensor is available in the precomputation step, our algorithm involves only FFT/iFFT transform and pointwise multiplication on vectors
of length $(2N)^{d}$, and it still holds true for the anisotropic density case no matter how strong is the anisotropy.
Compared with the no-tensor-accelerated original algorithm, memory cost is reduced dramatically from $(\prod_{j=1}^d S_j) N^d$ to $2^d N^{d}$,
and the improvement becomes more substantial as the anisotropy strength gets stronger.

\begin{remark}

To give a vivid illustration of the improvement, let us take a 3D convolution as example.
A typical double-precision computation on a $256^3$ grid requires $3.4$ Gb memory with the optimal threefold zero-padding,
while the fourfold original algorithm requires around $8$ Gb and the reduction factor is $\frac{37}{64}\approx 60\%$.
With tensor acceleration, the effective memory, once the tensor is available,
requires $1$ Gb which is one eighth of that in the fourfold version. For anisotropic density with $\bm \gamma = (1,1,\gamma)$,
the maximum memory cost is around $\lceil 1 + \sqrt{2}/\gamma\rceil /3 \times 3.4$ Gb,
which is about  $15$ Gb when $\gamma = 1/8$, and the computational costs are reduced substantially with tensor acceleration.
\end{remark}

\subsection{Error estimates}\label{Sec:ErrEst}
In this subsection, we shall present the error analysis for the nonlocal potential  whose error originally comes from the approximation of the density function.
To quantify the error estimates, here we define the following norms
 \beas
 \|\Phi -\Phi_{N} \|_{\infty,\bR_L}&:=& \|\Phi -\Phi_{N} \|_{L^{\infty}(\bR_L)} = \sup_{\bx \in  \textbf{R}_{L}} |(\Phi-\Phi_{N})(\bx)|,  \\
  \|\Phi -\Phi_{N} \|_{2,\bR_L}&:=& \|\Phi -\Phi_{N} \|_{L^{2}(\bR_L)} = \left(\int_{\textbf{R}_{L}} |(\Phi-\Phi_{N})(\bx)|^2 {\rm d} \bx \right)^{1/2},
 \eeas
  where $\Phi_{N}(\bx)$ is the Fourier spectral approximation of function $\Phi(\bx)$  and  $N$ is the number of grids in each spatial dimension. We define the semi-norm as follows
    \beas
  |\rho|_{m,\textbf{R}_{L}} := \left( \sum_{ | \bm{\alpha}|=m} \|\partial^{\bm{\alpha}} \rho \|^2_{2, \bm{\mathrm{R}}_{L} } \right)^{1/2},
  \eeas
with $\bm{\alpha}=(\alpha_1, \dots, \alpha_d) \in {\mathbb Z}^d$, $|\bm{\alpha}| = \sum\limits_{i=1}^{d}  \alpha_i$ and $\partial^{\bm{\alpha}}= \partial^{\alpha_1}_{x_1} \cdots \partial^{\alpha_d}_{x_d}$.
We use $A \lesssim B$ to denote $A\leq c B$ where the constant $c>0$ is independent of the grid number $N$.

 \begin{thm} \label{FourierApproximation}
For smooth and compactly supported function $\rho(\bx)$, assuming that $\supp \{\rho\} \subsetneq \bm{\mathrm{R}}_L$, then the following estimates
  \beas
  \|\partial^{ \bm{\alpha}}  \Phi -(\partial^{ \bm{\alpha}}\Phi)_{N} \|_{\infty, \bm{\mathrm{R}}_{L}}  &\lesssim &  N^{-(m-\frac{d}{2}- |\bm{\alpha}|)}~|\rho|_{m,\bm{\mathrm{R}}_{L}} ,\\
  \|  \partial^{ \bm{\alpha}}  \Phi -(\partial^{ \bm{\alpha}}\Phi)_{N} \|_{2, \bm{\mathrm{R}}_{L}} & \lesssim& N^{-(m- |\bm{\alpha}|)} |\rho|_{m,\bm{\mathrm{R}}_{L}},
  \eeas
hold true for any positive integer $m > \frac{d}{2}$, where $(\partial^{\bm \alpha}\Phi)_N = \left[U\ast (\partial^{\bm \alpha} \rho_N)\right] $ denotes the numerical
approximation of $\partial^{\bm \alpha}\Phi$.
specially, when $\bm{\alpha}= \textbf{0}$, we have
  \beas
  \| \Phi -\Phi_{N} \|_{\infty, \bm{\mathrm{R}}_{L}}  \lesssim  N^{-(m-\frac{d}{2})} |\rho|_{m,\bm{\mathrm{R}}_{L}} ,~~\qquad
  \|   \Phi -\Phi_{N} \|_{2, \bm{\mathrm{R}}_{L}}  \lesssim N^{-m} |\rho|_{m,\bm{\mathrm{R}}_{L}}.
  \eeas
\end{thm}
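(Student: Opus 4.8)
The plan is to reduce the potential error estimate to the classical spectral approximation error of the density itself, then invoke (or reprove) the standard Fourier interpolation error bounds. First I would observe that differentiating the convolution passes the derivative onto either factor, so $\partial^{\bm\alpha}\Phi = U\ast(\partial^{\bm\alpha}\rho)$ and, by definition, $(\partial^{\bm\alpha}\Phi)_N = U\ast(\partial^{\bm\alpha}\rho_N)$. Hence
\begin{equation}
\partial^{\bm\alpha}\Phi - (\partial^{\bm\alpha}\Phi)_N = U\ast\bigl(\partial^{\bm\alpha}\rho - \partial^{\bm\alpha}\rho_N\bigr) = U\ast \partial^{\bm\alpha}(\rho-\rho_N).
\end{equation}
Since $\supp\{\rho\}\subsetneq\bR_L$ and, by the periodic-extension argument of Section \ref{OptimalZeroPad}, $\rho_N$ agrees with the genuine finite Fourier series approximation of $\rho$ on the enlarged domain, the difference $e_N := \rho - \rho_N$ is itself supported (up to spectrally small terms) in a bounded region, so the convolution with the truncated kernel $U_G$ is well-defined and we are really estimating $U_G\ast\partial^{\bm\alpha}e_N$ on $\bR_L$.

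Next I would bound the two norms of $U_G\ast\partial^{\bm\alpha}e_N$ by norms of $e_N$. For the $L^2$ bound, Young's convolution inequality gives $\|U_G\ast \partial^{\bm\alpha}e_N\|_{2,\bR_L}\le \|U_G\|_{1}\,\|\partial^{\bm\alpha}e_N\|_{2}$, and $\|U_G\|_1 = \int_{\mathbf B_G}|U|<\infty$ is a fixed constant (this is exactly the integrability that KTM is built to exploit). For the $L^\infty$ bound I would instead write $\|U_G\ast \partial^{\bm\alpha}e_N\|_{\infty}\le \|U_G\|_{1}\,\|\partial^{\bm\alpha}e_N\|_{\infty}$ and then pass from $L^\infty$ to $L^2$ of the error via a Sobolev-type / Bernstein inequality on the finite-dimensional trigonometric space: because $e_N$'s high-frequency content is controlled, one gets $\|\partial^{\bm\alpha}e_N\|_{\infty}\lesssim N^{d/2}\|\partial^{\bm\alpha}e_N\|_{2}$ up to the usual loss, or alternatively one bounds $\|\partial^{\bm\alpha}e_N\|_\infty$ directly through the decay of Fourier coefficients. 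Either way the task collapses to the single estimate $\|\partial^{\bm\alpha}e_N\|_{2}\lesssim N^{-(m-|\bm\alpha|)}|\rho|_{m,\bR_L}$.

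That last estimate is the standard spectral/trigonometric interpolation error bound, and I would establish it by expanding $\rho$ in its full Fourier series on the padded periodic cell, writing $e_N$ as the sum of the truncation tail plus the aliasing error of the trapezoidal-rule coefficients $\widetilde\rho_\bk$, differentiating term by term (which multiplies the $\bk$-th coefficient by $(\mathrm i\bk)^{\bm\alpha}$, costing a factor $|\bk|^{|\bm\alpha|}$), and then using Parseval together with the decay $\sum_{\bk}|\bk|^{2m}|\widehat\rho_\bk|^2\lesssim |\rho|_{m,\bR_L}^2$ that follows from $\rho\in H^m$ with compact support inside the cell. Summing the remaining geometric-type series in the frequencies $|k_j|\ge N/2$ produces the $N^{-(m-|\bm\alpha|)}$ factor; the aliasing part is handled identically since trapezoidal aliasing only folds high modes onto low ones. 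The $\bm\alpha=\mathbf 0$ statements are then immediate specializations. I expect the main obstacle to be the $L^\infty$ case: one must be careful that the Sobolev embedding constant (or the frequency-localized Bernstein constant) does not secretly depend on $N$, which is why the exponent degrades by exactly $d/2$; handling this cleanly, rather than the routine $L^2$ bookkeeping, is where the real work lies, and it is also where the hypothesis $m>d/2$ enters to keep the series convergent.
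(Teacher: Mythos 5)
Your proposal follows essentially the same route as the paper: differentiate under the convolution so that the error is $U_G\ast\partial^{\bm{\alpha}}(\rho-\rho_N)$, bound this by $\|U_G\|_{1}$ times the corresponding norm of the density error (Young's inequality in $L^2$, the trivial sup bound in $L^\infty$), and close with the standard interpolation estimate (truncation tail plus aliasing, Cauchy--Schwarz in frequency, with $m>\frac{d}{2}$ for convergence of $\sum_{|\bk|>N}|\bk|^{-2m}$), which is exactly the paper's Lemma~\ref{FouApprox}. Two small cautions: your first option for the $L^\infty$ step, a Bernstein-type bound $\|\partial^{\bm{\alpha}}e_N\|_{\infty}\lesssim N^{d/2}\|\partial^{\bm{\alpha}}e_N\|_{2}$, cannot be applied to $e_N=\rho-\rho_N$ wholesale because the truncation tail $\rho-P_N\rho$ is not band-limited, so you must fall back on your second option (direct $\ell^1$ summation of the Fourier-coefficient tail), which is what the paper does; and the norm of $\rho-\rho_N$ produced by the convolution bound lives on $\bR_{(2\sqrt{d}+1)L}$, so you need the $2SL$-periodicity of $\rho_N$ together with $\supp\{\rho\}\subsetneq\bR_L$ to reduce it to the norm on $\bR_{SL}$ where the interpolation lemma applies --- you allude to this but it deserves to be stated as an explicit step rather than ``up to spectrally small terms.''
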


\begin{proof}
First, we shall derive the error estimates for the density approximation.
The interpolation density  $\rho_{N}(\bx)$ is  $2SL$-periodic in each spatial direction, i.e.,
$$\rho_{N}(\bx)=\rho_{N}(\bx+2SL\textbf{p}),\quad   \forall ~\bx \in \mathbb R^d \mbox{  and  } \textbf{p} \in \mathbb Z^d,$$
where $S = \sqrt{d} + 1$ is the optimal zero-padding factor. Using periodicity, it is easy to obtain
\beas\label{inf_norm}
&&\|  \rho_{N} \|_{\infty, \textbf{R}_{(2 \sqrt{d}+1)L} \setminus \textbf{R}_{S L} }=\|  \rho_{N} \|_{\infty, \textbf{R}_{SL} \setminus \textbf{R}_{L} }, \\[0.2em]
&&\label{2_norm}
\| \rho_{N} \|_{2, \textbf{R}_{(2 \sqrt{d}+1)L} \setminus \textbf{R}_{S L} }\lesssim \| \rho_{N} \|_{2, \textbf{R}_{SL} \setminus \textbf{R}_{L} }.
\eeas
Since $\supp\{\rho\} \subsetneq \textbf{R}_L$, we then derive the following estimates
\beas
\| \rho- \rho_{N} \|_{\infty,\textbf{R}_{(2 \sqrt{d}+1)L}} &=& \max \{\| \rho- \rho_{N} \|_{\infty,\textbf{R}_{SL}}, ~ \| \rho- \rho_{N} \|_{\infty,\textbf{R}_{SL} \setminus \bR_{L}}  \}= \| \rho- \rho_{N} \|_{\infty,\textbf{R}_{SL}}. \\
\| \rho- \rho_{N} \|_{2,\textbf{R}_{(2 \sqrt{d}+1)L}} &\lesssim &\| \rho- \rho_{N} \|_{2,\textbf{R}_{SL}}+\| \rho- \rho_{N} \|_{2,\textbf{R}_{SL} \setminus \bR_{L}} \lesssim  \| \rho- \rho_{N} \|_{2,\textbf{R}_{SL}}.
\eeas

Next, we will investigate the potential case ($ \bm{\alpha}= \textbf{0}$) and define the error function as
$$e_{N}(\bx): = (\Phi -\Phi_{N})(\bx),\quad \bx \in \textbf{R}_{L}.$$
To estimate the errors, we have
\beas
|e_{N}(\bx)|& =&     \left| \int_{\mathbf{B}_G} U (\by) \left(\rho-\rho_{N} \right) (\bx-\by) { \rm d} \by \right|,\\
&\le &\max_{\substack{\bx \in \textbf{R}_L \\ \by \in \mathbf{B}_G }} |(\rho - \rho_{N}) (\bx -\by)|  \int_{\mathbf{B}_G} U (\by) { \rm d} \by, \\
&\le&   \| \rho- \rho_{N} \|_{\infty,\textbf{R}_{(2 \sqrt{d}+1)L}} \int_{\mathbf{B}_G} U (\by) { \rm d} \by, \\
&=& \| \rho- \rho_{N} \|_{\infty,\textbf{R}_{SL}} \int_{\mathbf{B}_G} U (\by) { \rm d} \by,\quad \quad \bx \in \textbf{R}_{L}.
\eeas
Using Lemma \ref{FouApprox}(see  \ref{AppFourier}), taking supremum with respect to $\bx$, we obtain
\beas
\|  e_{N} \|_{\infty, \textbf{R}_{L}}  & \lesssim & \| \rho- \rho_{N} \|_{\infty,\textbf{R}_{SL}}  \lesssim  N^{-(m-\frac{d}{2})} |\rho|_{m,\textbf{R}_{SL}}= N^{-(m-\frac{d}{2})} |\rho|_{m,\textbf{R}_{L}}, \forall ~m > d/2.
\eeas
As for the $L^2$ norm, using Cauchy-Schwarz inequality, we obtain
\beas
|  e_{N}(\bx) |^2 & = &    \left|\int_{\mathbf{B}_G} U (\by) \left(\rho-\rho_{N} \right) (\bx-\by) { \rm d} \by \right|^2
  \le  \left( \int_{\mathbf{B}_G} U (\by) { \rm d} \by \right) \left( \int_{\mathbf{B}_G} U (\by) \left(\rho-\rho_{N} \right)^2 (\bx-\by) { \rm d} \by \right),
\eeas
Integrating the above equation on both sides with respect to $\bx$ over $\textbf{R}_L$ and utilizing Lemma \ref{FouApprox},
we have
\beas
\| e_{N} \|_{2, \textbf{R}_{L}}  & \le &  \left( \int_{\mathbf{B}_G} U (\by) { \rm d} \by \right)^{1/2} \left(  \int_{\textbf{R}_L} \int_{\mathbf{B}_G} U (\by) \left(\rho-\rho_{N} \right)^2 (\bx-\by) { \rm d} \by { \rm d} \bx \right)^{1/2}  \\
 & \le &   \| \rho- \rho_{N} \|_{2,\textbf{R}_{(2\sqrt{d}+1)L}}\left( \int_{\mathbf{B}_G} U (\by) { \rm d} \by \right)
  \\
  &\lesssim&   \| \rho- \rho_{N} \|_{2,\textbf{R}_{SL}} \lesssim N^{-m} |\rho|_{m,\textbf{R}_{SL}}= N^{-m} |\rho|_{m,\textbf{R}_{L}}.
\eeas
Lastly, for the general case $\bm{\alpha} \not = \textbf{0}$, interchanging the order of convolution and differentiation, we know that
\beas
\partial^{ \bm{\alpha}} \Phi(\bx) =\partial^{ \bm{\alpha}} ( U \ast \rho )(\bx) =( U \ast \partial^{ \bm{\alpha}} \rho)(\bx).
\eeas
Similarly, by applying results of the case $\bm{\alpha}= \textbf{0}$ by substituting $\partial^{\bm \alpha} \rho$ for $\rho$, and combining Lemma \ref{FouApprox}, we can complete the proof.
\end{proof}

\begin{remark}
For smooth and fast-decaying density,
it is reasonable to truncate the whole space into bounded domain such that the truncation errors are negligible, therefore,
we can treat such density as smooth and compactly supported function and apply Theorem \ref{FourierApproximation}.
In this case, the Fourier approximation is of spectral accuracy.
\end{remark}

\begin{remark}
For fast-decaying but non-smooth density, the convergence rates of the potential and its derivatives are limited by the density's regularity,
and we refer readers to \ref{AppFourier} for more details on the Fourier approximation of low-regularity function.
\end{remark}

\section{Numerical results} \label{NumericalResults}

\setcounter{equation}{0}
In this section, we shall investigate the effect of optimal zero-padding factor for the nonlocal potential evaluation with
isotropic and anisotropic densities, on the accuracy and efficiency for different kernels and smooth/nonsmooth density.
The computational domain $\bR_{L}^{\bm \gamma}$, where $\bm{\gamma} = (\gamma_1,\ldots, \gamma_d)$ is the anisotropy vector, is discretized uniformly in each spatial direction with mesh size $h_j$,
and we define  mesh size vector as $\bh = (h_1,\ldots, h_d)$.
For simplicity, we shall use $h$ and $\bm{S} = (S_1,\ldots, S_d)$ to denote the mesh size if all the mesh sizes are equal, and the zero-padding vector respectively.
The algorithms were implemented in Matlab (2016a) and run on a 3.00GH Intel(R) Xeon(R) Gold 6248R CPU with a 36 MB cache in Ubuntu GNU/Linux.

All the numerical errors presented here are measured in relative maximum norm, i.e.,
\begin{equation}
   {\mathcal E}:=\frac{\|\Phi-\Phi_{\bh}\|_{l^\infty}}{\|\Phi\|_{ l^\infty}}
=\frac{\textrm{max}_{\bx \in \mathcal T_{\bh}} |\Phi(\bx)-\Phi_{\bh}(\bx)|}{\textrm{max}_{\bx \in \mathcal T_{\bh}} |\Phi(\bx)|}, %
\end{equation}
where $\Phi_{\bh}$ is the numerical solution on mesh grid $\mathcal T_{\bh}$ and $\Phi(\bx)$ is the exact solution.

\begin{exmp} \label{3DPoissonInteraction}
The 3D Poisson potential with $U(\bx)=\frac{1}{4 \pi |\bx|}$. For anisotropic source $\rho_0( \bx ) =e^{-(x^2+y^2+z^2/\gamma_3^2)/\sigma^2}$ with $\sigma>0$ and $\gamma_3 \le 1$,
the nonlocal potential is given analytically as \cite{GauSum}
\be
\Phi_0(\bx)=\left\{\begin{array}{l}
\frac{\sigma^3 \sqrt{\pi}}{4 |\bx|} \textrm{Erf} ~ (\frac{|\bx|}{\sigma}), ~~~~~~~~~~~~~\quad~~~~~~ \gamma_3=1,\\
\\
\frac{\gamma_3 \sigma^2}{4} \int_0^{\infty} \frac{e^{-\frac{x^2+y^2}{\sigma^2(t+1)}} e^{-\frac{z^2}{\sigma^2 (t+ \gamma_3^2)}}}{(t+1) \sqrt{t+\gamma_3^2}} { \rm d} t, ~~~ \gamma_3 \not=1,\\
\end{array}\right.
\ee
where $ \textrm{Erf} ~(x)=\frac{2}{\sqrt{\pi}} \int _0 ^x e^{-t^2} { \rm d} t$ is the  error function \cite{handbook}.
We consider the following cases
\begin{itemize}
\item
\textbf{Case I:} The anisotropic source $\rho(\bx)=\rho_0(\bx)$ with $\Phi(\bx)=\Phi_0(\bx)$.
\item
\textbf{Case II:} Shifted source $\rho(\bx)=\rho_0(\bx)+ \rho_0(\bx-\bx_0)$ with $\Phi(\bx)=\Phi_0(\bx)+ \Phi_0(\bx-\bx_0)$.
\end{itemize}

\end{exmp}
Table \ref{Coulomb3Dtkm} presents errors of the 3D Poisson potential for isotropic density of \textbf{Case I}, i.e.,$\gamma_3 = 1$, in the upper part with $L = 8,\sigma = \sqrt{1.2}$.
Errors of the potential generated by anisotropic densities are shown in the lower part: \textbf{Case I} is computed with  $ L=12,\sigma=2, \bh=\frac{1}{2}\bm{\gamma}$,
and \textbf{Case II} is calculated with $L=16,\sigma=2, \bx_0=(2,2,0)$ and $\bh=\frac{1}{4}\bm{\gamma}$.
The optimal zero-padding factor is chosen as the integer multiple of $\frac{1}{2}$, e.g.,$ S = 2.5 \approx  \sqrt{2} + 1$.
From Table \ref{Coulomb3Dtkm}, we can conclude that for the 3D nonlocal potential computation via KTM:
(i) the twofold zero-padding is insufficient, the fourfold zero-padding is redundant and the threefold is optimal.
(ii)  the optimal factor for the anisotropic density grows linearly with the anisotropy strength $\gamma_f$.

\

To investigate the efficiency, we present the performance of KTM with/without tensor acceleration in terms of memory costs
and CPU time in Table \ref{CompareKTM}, where we choose isotropic density in \textbf{Case I} with $L = 8,\sigma = \sqrt{1.2}$ and $h=1/16$.
The computation is split into two parts: the precomputation part(\textbf{PreComp}): computation of the tensor $\widehat{T}_{\bk}$ or $\widehat{U_G}$ in the tensor/no-tensor version,
and the execution part (\textbf{Execution}).
In Figure \ref{TimeTestKTM3D},  we plot the CPU time of \textbf{PreComp} and \textbf{Execution} with/without tensor acceleration version for different anisotropy strength $\gamma_f$.

From Table \ref{CompareKTM}, we can see that, compared with the classical fourfold zero-padding version, KTM with optimal factor greatly reduces the memory and computational costs,
therefore, computation of a larger size is now possible on the personal computers.
Moreover, once the precomputation step is done, the effective computation consists of only a pair of FFT/iFFT
on  vectors of twice the length in each direction, no matter how strong anisotropy strength gets,
and this facilitate multiple potential evaluations on the same setups.
It is worth to note that the seemingly incorrect drop down at $\gamma_f = 2$ in the ``Tensor-PreComp'' curve is actually
caused by the integer multiple of $0.5$ selection in $S_i$.  The total zero-padding factor $\prod_{i=1}^{3}S_i$ is $25$ for $\gamma_f =2$ and $27$ when $\gamma_f =1$ (the isotropic case).

\begin{table}[!htbp]
\centering
\caption{Errors of  the 3D Poisson potential for isotropic (upper part) and anisotropic (lower part) density in \textbf{Example }\ref{3DPoissonInteraction}.}
\label{Coulomb3Dtkm}
\begin{tabular}{ccccc}
\toprule
 $h$& $ 2 $ & $1$ & $ 1/2 $ & $ 1/4 $ \\
  \midrule
$S=2$& 4.2908E-01&  1.0283E-01 & 1.0276E-01 &  1.0276E-01  \\
$S = 3$&  4.2117E-01&  2.9848E-03 & 1.8552E-08 &  3.7007E-16   \\
$S = 4$& 4.2046E-01&  2.9596E-03 & 2.0106E-08 &  3.7007E-16  \\
\midrule
  $\gamma_3$& $ 1 $ & $1/2$ & $ 1/4 $ & $ 1/8 $  \\
  \midrule
\textbf{Case I}  & 3.3307E-16&  5.4171E-15 & 4.8932E-15 &  3.8102E-15  \\
\textbf{Case II}  & 5.1902E-16&  5.6243E-15 & 5.3014E-15 &  4.1688E-15 \\
$\bm {S}$& (3, 3, 3)&  (2.5, 2.5, 4) & (2.5, 2.5, 7) & (2.5, 2.5, 12.5)\\
\bottomrule
\end{tabular}
\end{table}

\begin{table}[!htbp]
\centering
\caption{The performance of KTM for 3D Poisson potential with isotropic density in \textbf{Case I} of \textbf{Example }\ref{3DPoissonInteraction}.}
\label{CompareKTM}
\setlength{\tabcolsep}{4mm}{
\begin{tabular}{cccccc}
\toprule
\multirow{2}{*}{Tensor}  & \multirow{2}{*}{$S$} &\multicolumn{2}{c} {\textbf{PreComp}}  &\multicolumn{2}{c} {\textbf{Execution}}\\
\cline{3-6}
& &Mem(Gb) &Time(s) & Mem(Gb)&Time(s) \\
\midrule
\multirow{2}{*}{Yes} &$3$   &4.4 & 59.80 &2.3& 19.31 \\
          &$4$  & 9.0 & 158.58&2.3&   19.32     \\
\midrule
\multirow{2}{*}{No}    &$3$    &3.4 & 12.53 &7.0& 61.18\\
      &$4$     & 8.0 & 32.96&16.3&    167.00  \\
\bottomrule
\end{tabular} }
\end{table}

\begin{figure}
\centering
\includegraphics[width=0.6\textwidth]{./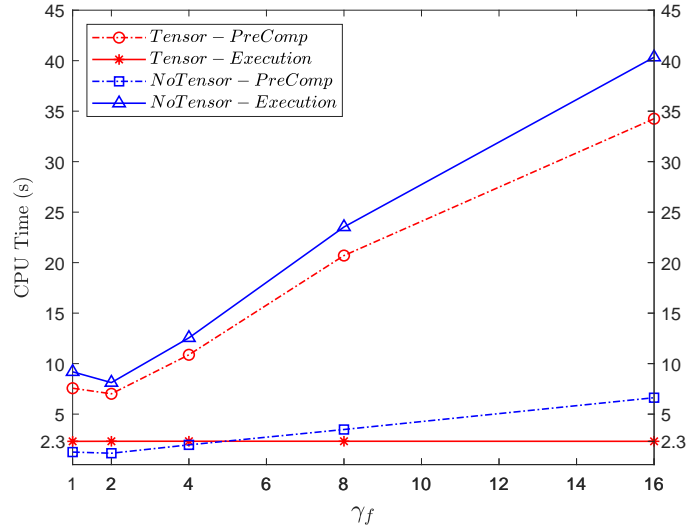}
\caption{Timing results of KTM versus increasing anisotropy strength $\gamma_f$. }
\label{TimeTestKTM3D}
\end{figure}

\begin{exmp}\label{exmp:Poisson}
   The 1D and 2D Poisson potential with convolution kernel
   \begin{equation*}
      U(\bx) =  \left\{
         \begin{array}{ll}
            -\frac{1}{2} |x|,     & \mbox{1D Poisson},\\
            -\frac{1}{2\pi} \ln(|\bx|),   & \mbox{2D Poisson}.
         \end{array}
      \right.
   \end{equation*}
First, we consider an isotropic Gaussian source density  $\rho(\bx) = e^{-|\bx|^2/ \sigma ^2}$. The 1D and 2D Poisson potential reads as \cite{atkm}
   \begin{equation*}
      \Phi(\bx) =  \left\{
         \begin{array}{ll}
            -\frac{\sigma^2}{2} e^{-x^2/\sigma^2}-\frac{\sqrt{\pi} \sigma}{2} x \textrm{Erf}~(\frac{x}{\sigma}),     & \mbox{1D Poisson},\\ [0.4em]
 =-\frac{\sigma^2}{4} \left[ E_1(\frac{|\bx|^2}{\sigma^2}) + 2 \ln(|\bx|) \right],   & \mbox{2D Poisson},
         \end{array}
      \right.
   \end{equation*}
where $E_1(r):= \int_r^{\infty} t^{-1} e^{-t} dt $  for $ r>0 $ is the exponential integral function \cite{handbook}.\\
\noindent Next, we consider an anisotropic density $\rho(\bx)$, which is the Laplacian of $\Phi(\bx)=e^{-\frac{x^2}{\sigma^2}-\frac{y^2}{\alpha^2}}$, i.e.,
   \beas
    \rho( \bx ) = -\Delta \Phi(\bx) = \Phi(\bx) \left(-\frac{4 x^2}{\sigma^4} - \frac{4 y^2}{\alpha^4}+ \frac{2}{\alpha^2} +\frac{2}{\sigma^2} \right).
    \eeas
\end{exmp}

Table \ref{tab:Poisson} presents errors of the 1D (upper) and 2D (lower) Poisson potentials
that are computed with $L=8$, $\sigma=\sqrt{1.2}$, from which
we can see clearly that twofold/threefold zero-padding is optimal for the 1D and 2D problem respectively.
Moreover, if a fractional zero-padding is considered,
\textbf{2.5}-fold padding is the most efficient for 2D problem in terms of memory cost and computation time.
Table \ref{Poisson2Dtkm_anis} presents errors of Poisson potential with anisotropic density
with  $L=10,\sigma=1.2, \alpha=\gamma_2 \sigma, \bm{\gamma} = (1,\gamma_2)$ and $\textbf{h}=\frac{1}{4}\bm{\gamma}$.

\begin{table}[!htbp]
\centering
\caption{Errors of the 1D (upper) and 2D (lower) Poisson potentials for isotropic density in \textbf{Example}\ref{exmp:Poisson}.}
\label{tab:Poisson}
\begin{tabular}{rcccc}
\toprule
$ h $  & $ 2 $     & $1$         & $ 1/2 $    & $ 1/4 $ \\
\hline
  $S = 1$& 2.1299&  1.9228 & 1.9227 &  1.9227   \\
  $S = 2$& 1.0356E-01&  1.3938E-04 & 6.3941E-10 &  4.5744E-16   \\
  $S = 3$& 1.1294E-01&  2.2359E-03 & 1.3244E-07 &  6.4328E-16   \\
\midrule
$S = 2  $& 3.0583E-01&  4.5718E-02 & 4.7154E-02 &  4.7154E-02 \\
$S = 2.5$& 2.0928E-01&  1.4561E-03 & 4.8882E-08 &  1.6780E-15 \\
$S = 3  $& 2.2027E-01&  1.0051E-03 & 2.0456E-08 &  1.6780E-15   \\
$S = 4  $& 2.2778E-01&  1.3082E-03 & 7.9905E-08 &  1.1441E-15 \\
\bottomrule
\end{tabular}
\end{table}

\begin{table}[!htbp]
\centering
\caption{Errors  of the 2D Poisson potential for anisotropic density in \textbf{Example} \ref{exmp:Poisson}.}
\label{Poisson2Dtkm_anis}
\begin{tabular}{rccccc}
  \toprule
   $\gamma_2$& $ 1 $ & $1/2$ & $ 1/4 $ & $ 1/8 $ & $1/16$ \\
  \midrule
$\mathcal E$& 5.1070E-15&  5.3429E-15 & 1.0596E-14 &  3.5612E-14 & 3.1667E-14 \\
   \midrule
 $\bm {S}$ & (2.5, 2.5)&  (2.5, 3.5) & (2.5, 5.5) & (2, 9.5) & (2, 17.5) \\
\bottomrule
\end{tabular}
\end{table}

\begin{exmp}\label{exmp:2d:Coulomb}
The 2D Coulomb potential with $U(\bx)=\frac{1}{2 \pi |\bx|}$.
For anisotropic source $\rho( \bx ) =e^{-\frac{1}{\sigma^2} (x^2+\frac{y^2}{\gamma_2^2})}$ with $\gamma_2 \le 1$,
the 2D Coulomb potential is given analytically as \cite{GauSum}
  \be
\Phi(\bx)=\left\{\begin{array}{l}
\frac{\sqrt{\pi} \sigma}{2} I_0(\frac{|\bx|^2}{2 \sigma^2}) e^{-\frac{|\bx|^2}{2 \sigma^2}}, ~~~~~~~~~~\quad~~~~~~ \gamma_2=1,\\
\\
\frac{\gamma_2 \sigma}{\sqrt{\pi}} \int_0^{\infty} \frac{e^{-\frac{x^2}{\sigma^2(t^2+1)}} e^{-\frac{y^2}{\sigma^2 (t^2+ \gamma_2^2)}}}{\sqrt{t^2+1} \sqrt{t^2+\gamma_2^2}} { \rm d} t, ~~~~ \gamma_2\not=1,\\
\end{array}\right.
\ee
where $I_0(x)$ is the modified Bessel function of the first kind \cite{handbook}.
\end{exmp}

Table \ref{Coulomb2Dtkm} shows errors of the 2D Coulomb potential $\Phi$ and its derivative $\partial_x \Phi$ for isotropic density, i.e., $\gamma_2 =1$,
$L=8,\sigma=\sqrt{1.2}$, from which we can see clearly that the potential and its derivative can both be computed with spectral accuracy.
Table \ref{Coulomb2Dtkm_anis} shows errors of the potential with anisotropic density with $L=12, \sigma=1.5$ and $\textbf{h}=\frac{1}{4}(1, \gamma_2)$.

\begin{table}[!htbp]
\centering
\caption{Errors  of the 2D Coulomb potential $\Phi$ and $\partial_x \Phi$ for isotropic density in \textbf{Example} \ref{exmp:2d:Coulomb}.  }
\label{Coulomb2Dtkm}
\begin{tabular}{rccccc}
\toprule
   $\Phi$& $ h = 2 $ & $h = 1$ & $ h = 1/2 $ & $ h =  1/4 $   \\
  \midrule
$S = 2$  & 2.0661E-01&  2.3210E-03 & 1.0203E-03 &  1.0203E-03 \\
$S = 2.5$& 2.0771E-01&  2.3577E-03 & 2.6029E-08 &  4.5744E-16\\
$S = 3$  &2.0822E-01&  2.2739E-03 & 2.7514E-08 &  5.7180E-16 \\
$S = 4$  &2.0869E-01&  2.3884E-03 & 2.8837E-08 &  3.4308E-16  \\
   \midrule
   $\partial_x\Phi$& $ h = 2 $ & $h = 1$ & $ h = 1/2 $ & $ h =  1/4 $   \\
   \midrule
$S = 2$   & 1.0794&  2.8659E-02 & 6.3223E-03 &  6.3224E-03        \\
$S = 2.5$ & 1.0789&  2.8563E-02 & 1.7366E-06 &  8.7677E-16        \\
$S = 3$   & 1.0790&  2.8550E-02 & 1.7343E-06 &  6.8106E-16        \\
$S = 4$   & 1.0791&  2.8540E-02 & 1.7330E-06 &  7.3194E-16        \\
\bottomrule
\end{tabular}
\end{table}

\begin{table}[!htbp]
\centering
\caption{Errors  of the  2D Coulomb potential for anisotropic densities in \textbf{Example} \ref{exmp:2d:Coulomb}.}
\label{Coulomb2Dtkm_anis}
\begin{tabular}{rccccc}
\toprule
$\gamma_2$& $ 1 $ & $1/2$ & $ 1/4 $ & $ 1/8 $ & $1/16$ \\
\midrule
${\mathcal E}$ & 5.8462E-16&  2.3117E-15 & 1.4986E-15 &  1.6609E-15 & 2.0713E-15 \\
 \midrule
$\bm{S}$& (2.5, 2.5)&  (2.5, 3.5) & (2.5, 5.5) & (2, 9.5) & (2, 17.5)  \\
\bottomrule
\end{tabular}
\end{table}

\begin{exmp}\label{exmp:DDI} The dipole-dipole interaction(DDI).\\
\noindent \textbf{The 3D DDI.} The kernel is given as
\beas
 U(\bx)& = & \frac{3}{4 \pi} \frac{\textbf{m} \cdot \textbf{n} - 3 (\bx \cdot \textbf{m}) (\bx \cdot \textbf{n})/|\bx|^2 }{|\bx|^3} ,~~~ \bx \in \mathbb{R}^3,
  \eeas
  where $\textbf{n}$, $\textbf{m} \in \mathbb{R}^3$ are unit vectors representing the dipole orientations, and
the 3D potential is reformulated as \cite{MathematicalBEC,GS_dynamic,NUFFT_gsdy,NUFFT,TrappedAtomic_anis,TrappedAtomic_dipole}:
  \be
  \Phi(\bx)= -(  \textbf{m} \cdot \textbf{n} ) \rho(\bx)-3 \frac{1}{4 \pi |\bx|} \ast (\partial_{\textbf{n} \textbf{m}} \rho),
\ee
where $\partial_{\textbf{m}} = \textbf{m} \cdot  \nabla $ and $ \partial_{\textbf{n}\textbf{m}}= \partial_{\textbf{n}} (\partial_{\textbf{m}})$.
In fact, the potential can be calculated via the 3D Poisson potential with source term $\partial_{\textbf{n}\textbf{m}} \rho$,
which can be easily computed numerically via Fourier spectral method \cite{NickBook}.

\noindent \textbf{The quasi-2D DDI.} The kernel is given as \cite{Quasi2Dpra,MathematicalBEC}
 \beas
  U(\bx)&=&-\frac{3}{2} ( \partial_{\bn_\perp\bn_\perp}-
n_3^2\nabla^2)\frac{1}{(2\pi)^{3/2}}\int_{\mathbb R}
\frac{e^{-s^2/2}}{\sqrt{|\bx|^2+\varepsilon^2s^2}}{\rm d}s \\
&:=&-\frac{3}{2} ( \partial_{\bn_\perp\bn_\perp}-
n_3^2\nabla^2)  \widetilde{U}(\bx), \quad \bx\in{\mathbb R}^2,
\eeas
where $\varepsilon >0$,$\nabla_\perp^2=\Delta $, $\bn_\perp=(n_1,n_2)^T$,
$\partial_{\bn_\perp}=\bn_\perp\cdot\nabla_\perp$ and $\partial_{\bn_\perp\bn_\perp}=\partial_{\bn_\perp}(\partial_{\bn_\perp})$, and the 2D potential
can be reformulated similarly as a convolution of $\widetilde U$ and an effective density $\widetilde \rho$ as follows
\begin{equation}
   \Phi(\bx) =\widetilde U \ast \left[ -\frac{3}{2} ( \partial_{\bn_\perp\bn_\perp}-
   n_3^2\nabla^2) \right ] \rho  = \widetilde U \ast \widetilde \rho.
\end{equation}
For a Gaussian density $\rho( \bx ) =e^{-|\bx|^2/ \sigma ^2}$, the DDI potentials are given explicitly as
\begin{equation}
\Phi(\bx) =
  \left\{\begin{array}{ll}
  -(  \textbf{m} \cdot \textbf{n} ) \rho(\bx)- 3 \partial_{\textbf{n} \textbf{m}} \left( \frac{\sigma^2 \sqrt{\pi}}{4} \frac{\textrm{Erf}~(r/\sigma)}{r/ \sigma} \right)
  , & d = 3,\\[0.5em]

-\frac{8}{\sigma^4} e^{-\frac{r^2}{\sigma^2}} \pi \int_0^\infty \widetilde{U}(t)t e^{-\frac{t^2}{\sigma^2}} [(-\sigma^2+r^2+t^2)I_0(\frac{2 r t}{\sigma^2})-2 r t I_1(\frac{2 r t}{\sigma^2}) ]{\rm d}t,
      & d= 2, \bn = (0,0,1)^T,
         \end{array}\right.
         \label{PhiDDI}
\end{equation}
where $I_0$ and $I_1 $ are the modified Bessel functions of order $0$ and $1$ respectively.

\end{exmp}

Table \ref{tab:DDI} shows errors of the 3D DDI (the upper part), computed with $L = 8, \sigma=\sqrt{1.2}$
with different dipole orientations $\textbf{n}= (0.82778, 0.41505, -0.37751)^{T},\textbf{m}= (0.3118, 0.9378, -0.15214)^{T}$,
and the quasi-2D DDI (the lower part), computed with $L=12, \sigma=2,\varepsilon=\frac{1}{\sqrt{32}}$ and $\bn = (0,0,1)^T$.

\begin{table}
\centering
\caption{Errors of the 3D DDI (upper part) and the quasi-2d DDI (lower part) in \textbf{Example} \ref{exmp:DDI}.}
\label{tab:DDI}
\begin{tabular}{ccccc}
\toprule
  $h$& $ 2 $ & $1$ & $ 1/2 $ & $ 1/4 $ \\
  \hline
  $S = 2$& 1.5254E+00&  8.3971E-02 & 7.4273E-02 &  7.3380E-02  \\
  $S = 3$& 1.5763E+00&  2.9150E-02 & 8.4761E-07 &  7.0062E-15   \\
  $S=4$& 1.5862E+00&  2.9422E-02 & 8.7784E-07 &  7.2865E-15   \\
  \midrule
   $S = 2$&1.9970E-02 & 3.0216E-03 & 3.0222E-03 & 3.0222E-03 \\
  $ S = 2.5$&1.9866E-02 & 4.5791E-06 & 6.4182E-15 & 5.2386E-15 \\
 $S = 3$& 1.5975E-02 & 3.7262E-06 & 6.4768E-15 & 5.2042E-15 \\
 $S = 4$& 1.5525E-02 & 3.6151E-06 & 6.4828E-15 & 5.2386E-15 \\
\bottomrule
\end{tabular}
\end{table}

\begin{exmp}\label{exmp:3d:QQI}
The 3D quadrupolar potential.
\end{exmp}
The quadrupole-quadrupole interaction was first proposed in \cite{QQI}, which reads as
\bea
U(\bx) =  \frac{1}{r^5} Y_{4}^{0}(\theta) =  \frac{3}{16 \sqrt{\pi}} \frac{1}{r^5} \left(3-30 \cos^2 \theta + 35 \cos^4 \theta \right),
\quad \bx \in \mathbb R^3,
 \label{QQI_U}
\eea
where $\theta$ is the polar coordinate, $Y_{4}^{0}$ is the spherical harmonic function \cite{handbook}, and
the Fourier transform is
\be\label{FourQQI} \widehat{U}(\bk)=\frac{4 \pi}{105} k^2 Y_{4}^{0}(\theta_k), \quad \bk \in \mathbb R^3.
\ee
Since $\widehat{U}(\bk)$ is not smooth, standard Fourier method results in saturated convergence order\cite{quasi2D}.
Using partial wave expansion, we obtain the Fourier transform of the truncated kernel as follows
\bea
\widehat{U_G}(\bk) & = &  \int_{\mathbf{B}_G} \frac{Y_{4}^{0} (\theta )}{r^5} e^{-i \bk \cdot \bx} {\rm d} \bx = 4 \pi Y_{4}^{0}(\theta_k) \int_{0}^G \frac{1}{r^3} j_4( k r) { \rm d} r \notag \\
& = &  4 \pi Y_{4}^{0}(\theta_k) \left[ \frac{k^2}{105}- \frac{(-15+G^2 k^2) \cos(G k)}{G^6 k^4}+ \frac{3(-5+ 2 G^2 k^2) \sin(G k)}{G^7 k^5} \right].
\eea
For a Gaussian density $\rho(\bx)=e^{-|\bx|^2/\sigma^2}$, the quadrupolar potential is given explicitly as
\bea
\Phi(\bx) &=& \frac{1}{(2 \pi)^3} \int_{\mathbb{R}^3} \widehat{U}(\bk) \widehat{\rho}(\bk) e^{i \bk \cdot \bx} {\rm d} \bk  = \frac{2}{105 \pi^3} Y_{4}^{0}(\theta) \int_0^{\infty} k^4 \widehat{\rho}(k) j_4(k r) { \rm d} k \notag \\
 & =&\frac{2 \pi \sigma^3}{105} Y_{4}^{0}(\theta) \left[ -e^{-\frac{r^2}{\sigma^2}} \left( \frac{  8 r^6+28 r^4 \sigma^2 + 70 r^2 \sigma^4 +105 \sigma^6 }{r^4 \sigma^7} \right)+ \textrm{Erf}~(\frac{r}{\sigma}) \frac{105 \sqrt{\pi}}{2 r^5} \right].
\eea
Table \ref{QQI} presents errors of the quadrupolar potential computed with $L=12$, $\sigma=1.5 $, zero-padding factor $S$ and mesh size $h$.

\begin{table}[!htbp]
\centering
\caption{Errors of the 3D quadrupolar potential in \textbf{Example} \ref{exmp:3d:QQI}.}
\label{QQI}
\begin{tabular}{ccccc}
  \toprule
 & $ h = 2 $ & $h = 1$ & $ h =1/2 $ & $ h =  1/4 $\\
  \midrule
  $S=2$& 5.3856E-01&  1.3009E-02 & 7.4579E-06 &  7.4579E-06  \\
  $S=3$& 5.4149E-01& 1.3170E-02  & 4.3450E-10 &  3.1796E-14   \\
  $S=4$& 5.4252E-01&  1.3227E-02 & 4.4095E-10 &  2.2994E-14   \\
\bottomrule
\end{tabular}
\end{table}

\begin{exmp}\label{Exp:nonsmooth}
Nonsmooth density. We consider the 1D/2D/3D Poisson potential and 2D Coulomb potential generated by a nonsmooth density, i.e.,
\begin{equation}
\rho(\bx)=\left\{\begin{array}{ll}
      \left(1-|\bx - \bm{\delta} |^2\right)^m,& |\bx-\bm{\delta}| \le 1,\\ [0.2em]
0, &|\bx-\bm{\delta}| >1, \\
\end{array}\right.
\end{equation} with $m \in \mathbb Z^{+}$ and $\bm{\delta}\in \mathbb R^{d}$ being the shift vector.
It is clear that $\rho\in C^{m-1}(\mathbb R^{d})$, and the corresponding nonlocal potentials are detailed explicitly in \ref{exact}.
\end{exmp}

Figure \ref{PoissonNotSmooth} shows the errors and convergence order of $\partial_x \rho$ and $\partial_x \Phi$ of the Poisson potentials,
where $N$ is the grid number in each direction and $\bm{\delta}=\textbf{0}$, $L=2$.
Figure \ref{CoulombNotSmooth} shows the errors  and convergence order  of  $\partial_x \Phi $ generated by the 2D Coulomb kernel with  $L=2$.

From Figure \ref{PoissonNotSmooth}- \ref{CoulombNotSmooth}, we can see that the convergence order for $\partial_x \rho$ is $m-1$,
while it is $m+1$ for $\partial_x \Phi$ in 1D/2D/3D Poisson problem, and is $m$ for the 2D Coulomb case.
The convergence order of the potential is usually higher than that of the density, and it is compatible with Theorem \ref{FourierApproximation} but performs better.

\begin{figure}%[ht]
\centering
\includegraphics[scale=0.50]{./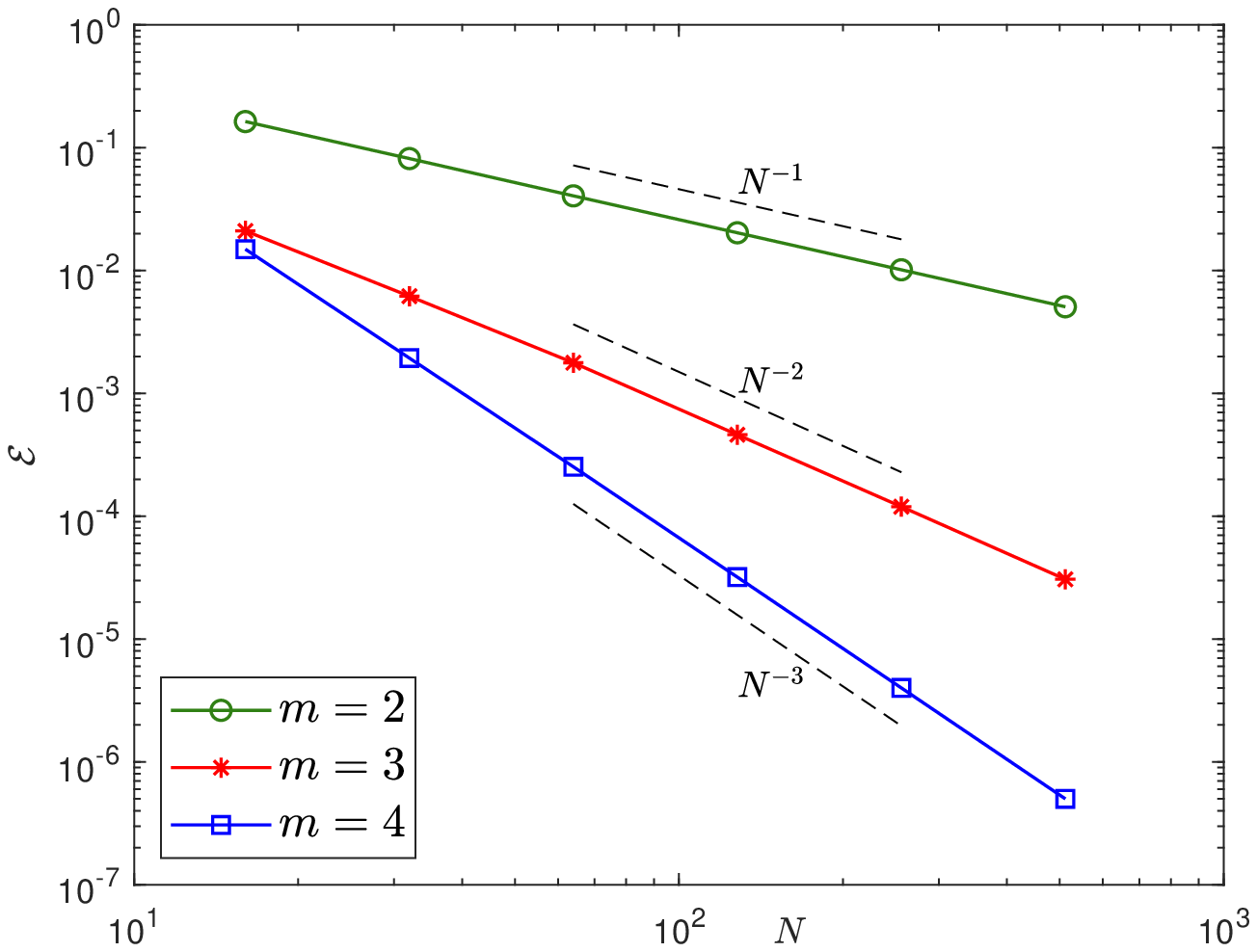} \quad
\includegraphics[scale=0.50]{./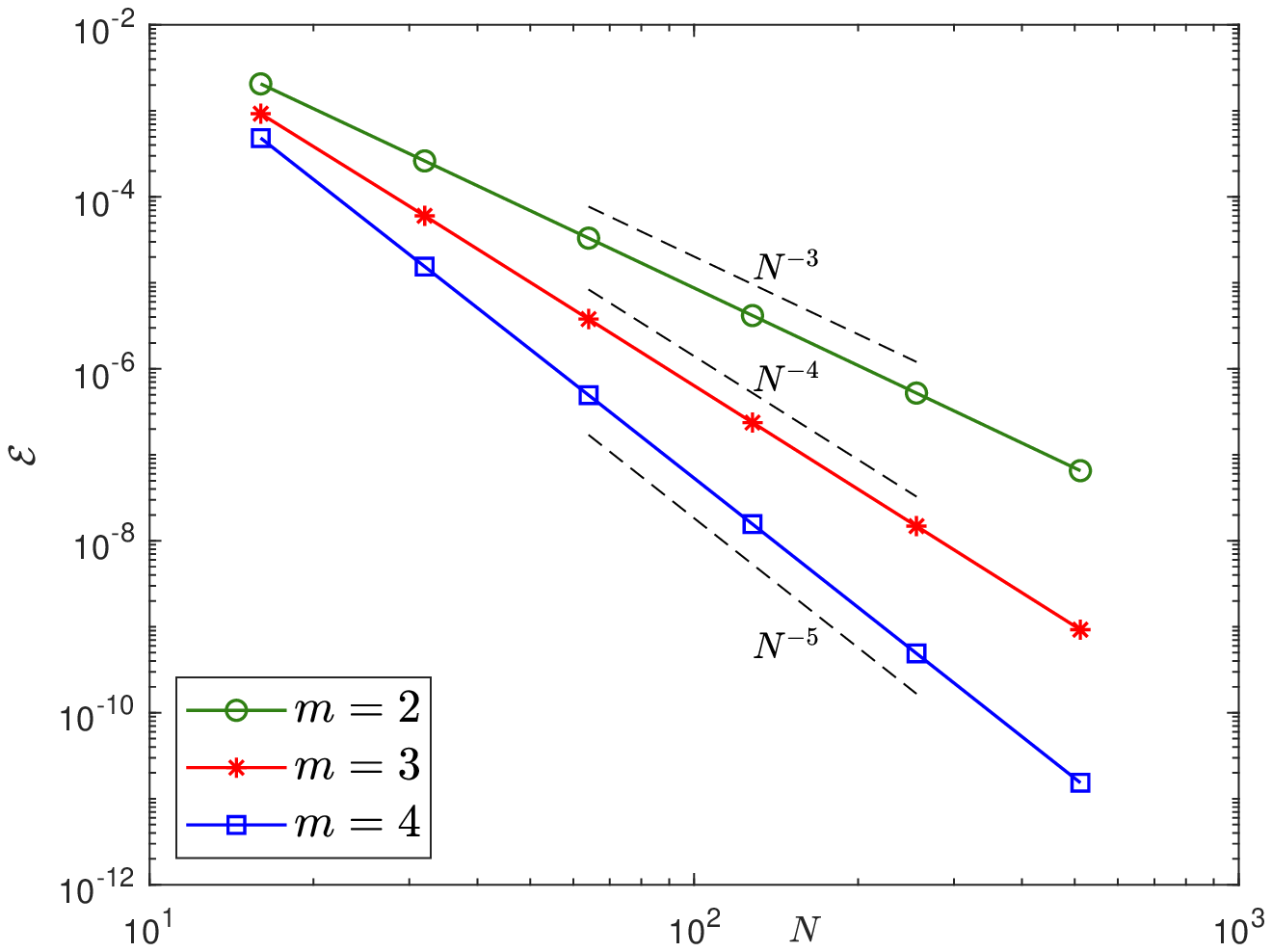} \\
\centering
\includegraphics[scale=0.50]{./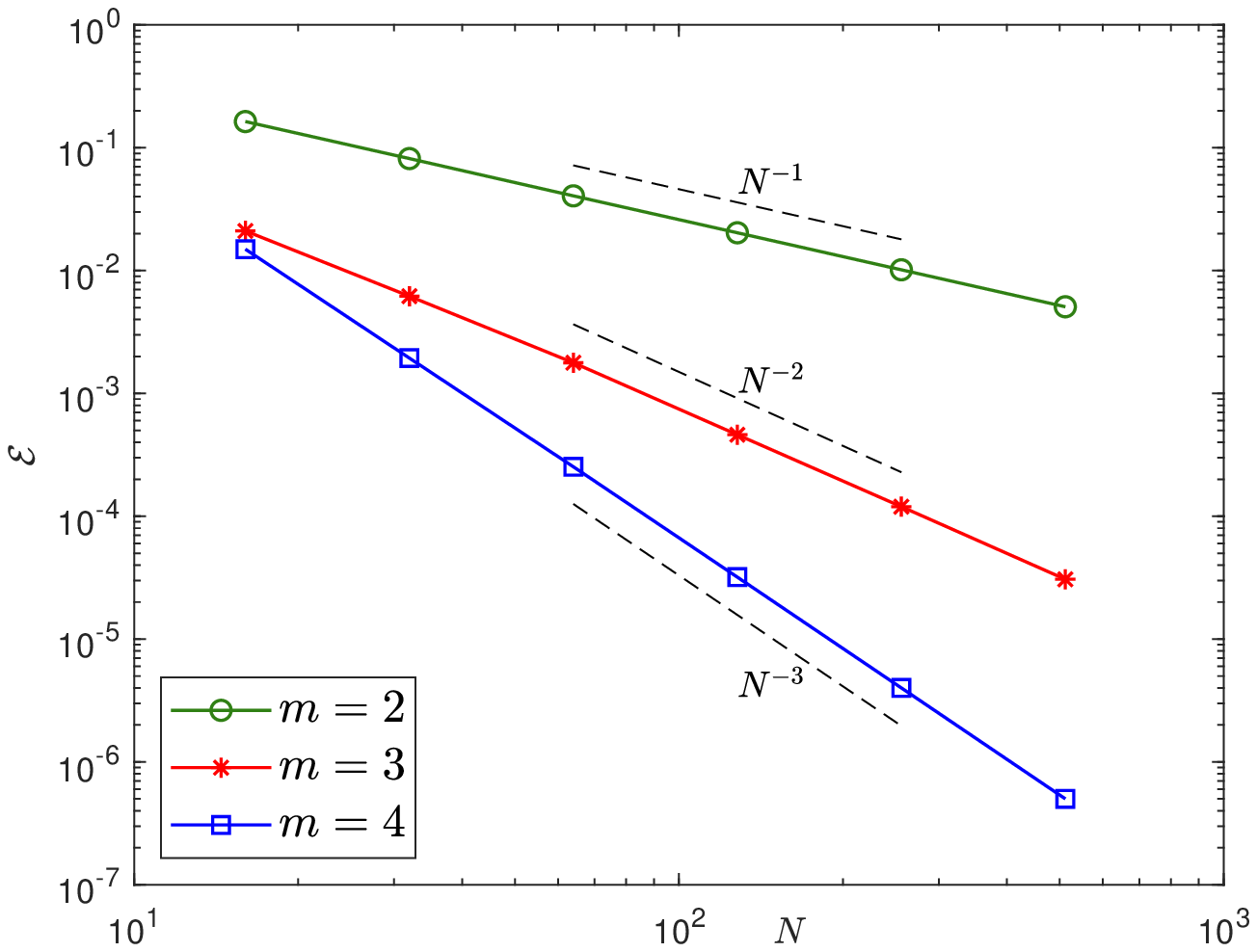} \quad
\includegraphics[scale=0.50]{./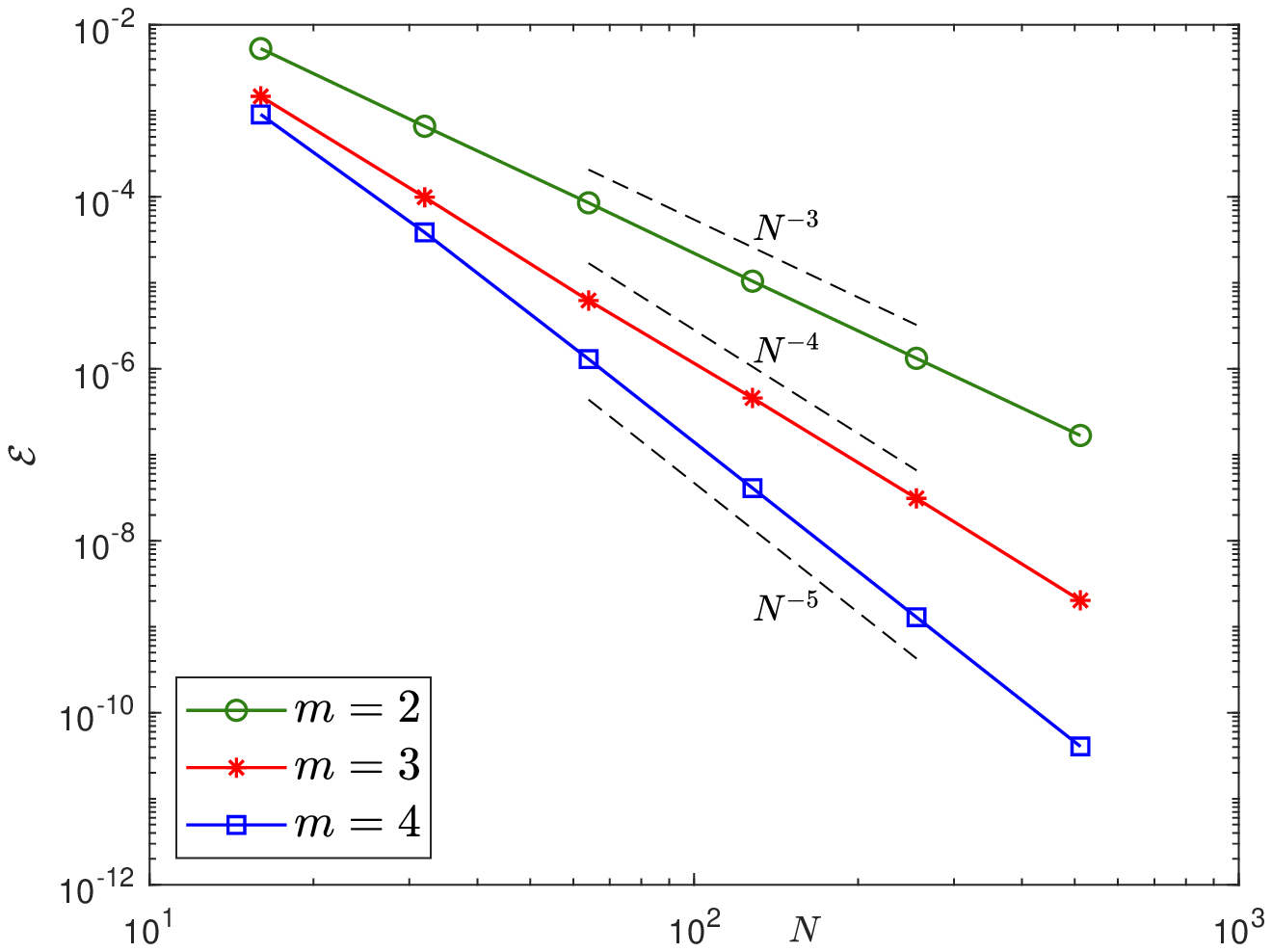} \\
\centering
\includegraphics[scale=0.50]{./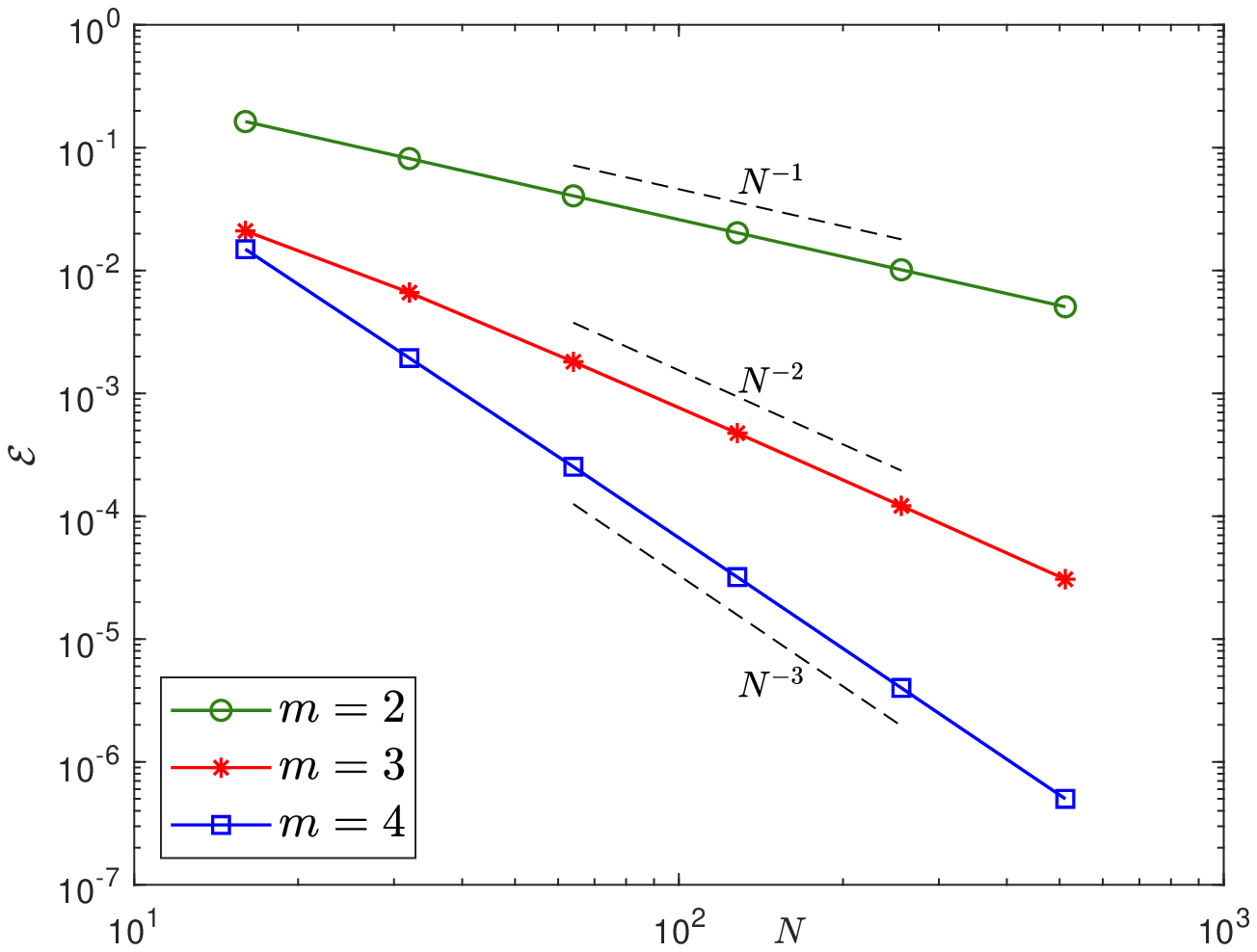} \quad
\includegraphics[scale=0.50]{./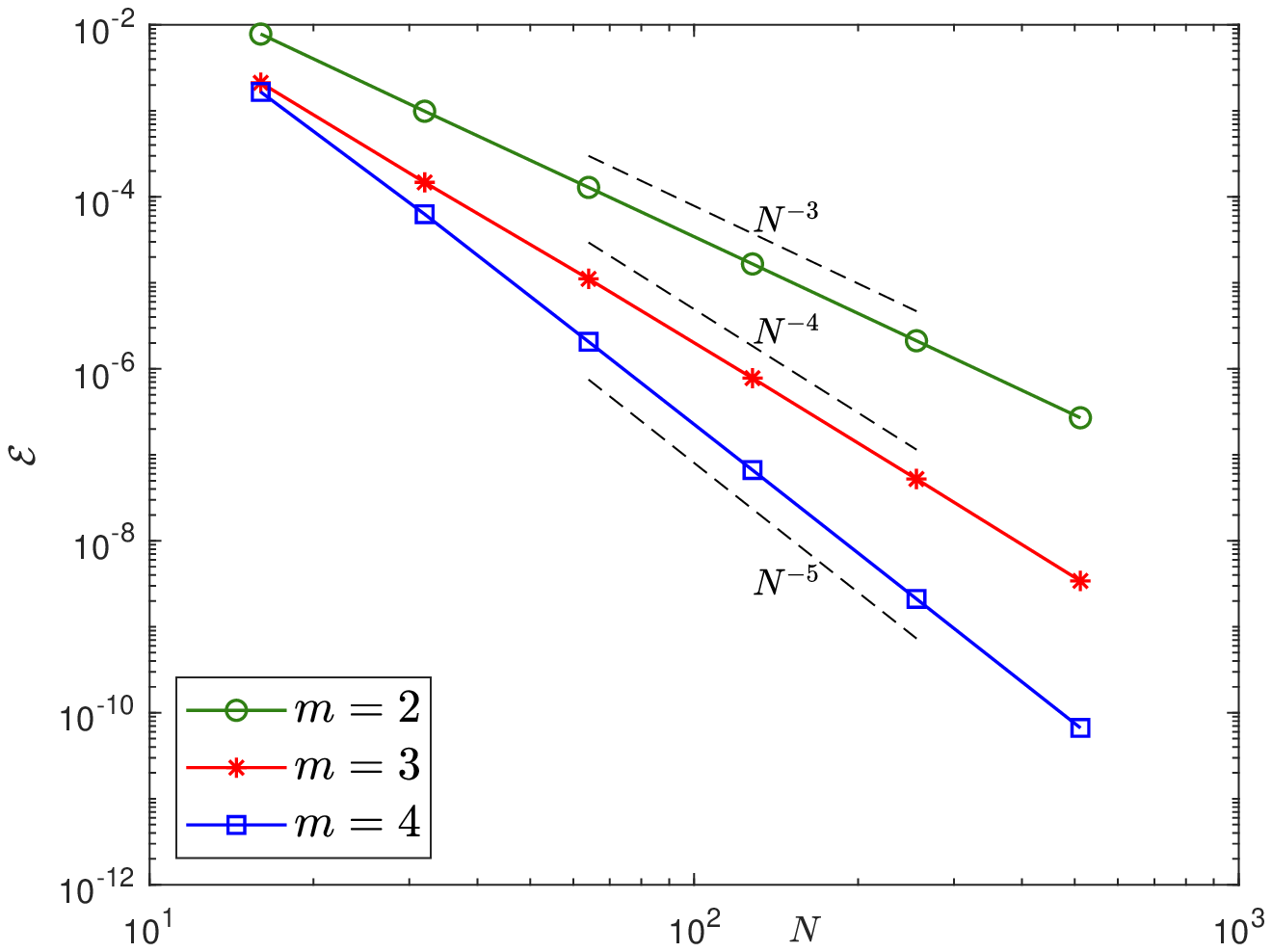} \\
\caption{Errors of $\partial_x \rho $ (left) and  $\partial_x \Phi $ (right) with $d=1, 2, 3$ (from top  to bottom) for Poisson potential with compact but  non-smooth density in \textbf{Example} \ref{Exp:nonsmooth}.}
\label{PoissonNotSmooth}
\end{figure}

\begin{figure}
\centering
\includegraphics[scale=0.50]{./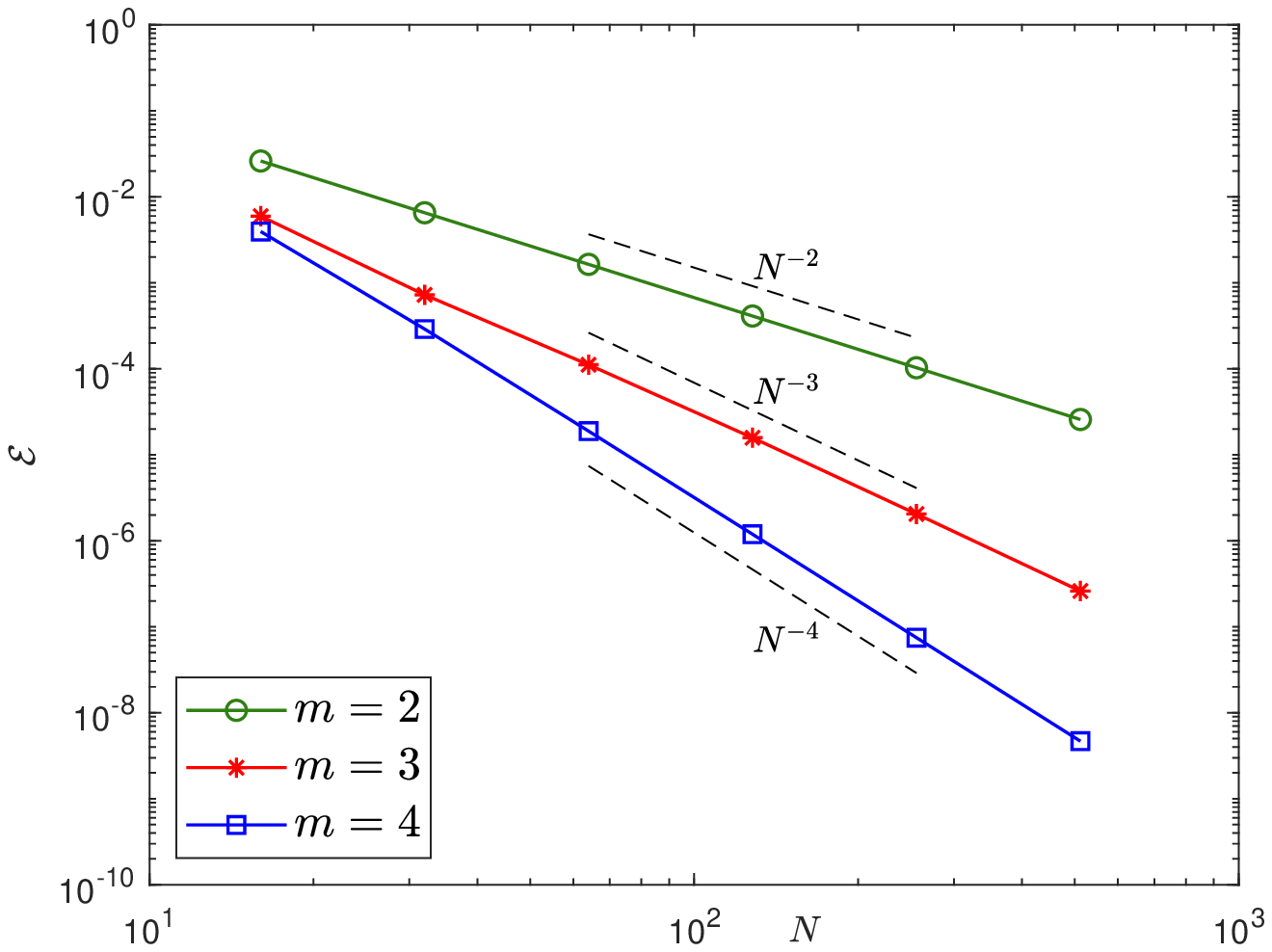} \quad
\includegraphics[scale=0.50]{./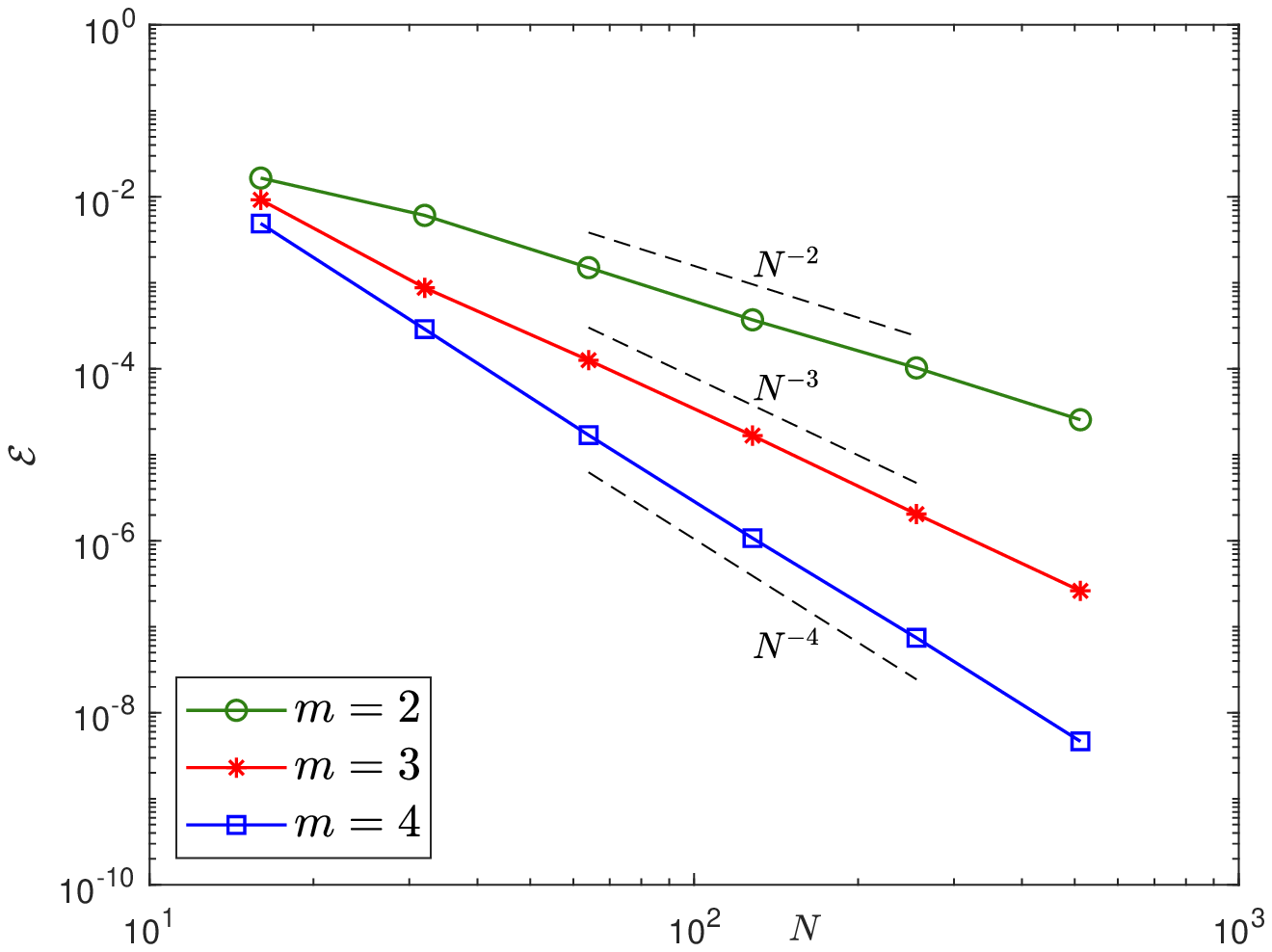}
\caption{Errors of $\partial_x \Phi $ for the 2D Coulomb potential with non-smooth density with $\bm{\delta}=(0,0)$(left) and $\bm{\delta}=(0.11, 0.22)$ (right) in \textbf{Example} \ref{Exp:nonsmooth}.}
\label{CoulombNotSmooth}
\end{figure}

\section{Conclusions} \label{conclusions}
In this article, we investigated the optimal zero-padding factor of the Kernel Truncation Method in the convolution-type nonlocal potential evaluation, and derived
the optimal zero-padding factor is $\sqrt{d}+1$ where $d$ is space dimension. The memory cost is greatly decreased to a small fraction, i.e., $(\frac{\sqrt{d}+1}{4})^d$,
of that in the empirical \textit{fourfold }algorithm.
We conclude that \textbf{twofold} padding is optimal for the 1D problem,
and \textbf{threefold }, instead of \textit{fourfold}, padding suffices to achieve spectral accuracy the 2D and 3D problems.
While for the anisotropic density, the total optimal zero-padding factor grows linearly with the anisotropy strength $\gamma_f$.
The KTM is realized with Fourier spectral method and it can be rewritten as a discrete convolution regardless of the anisotropy strength.
As long as the discrete tensor is available via a precomputation step, the effective computation is merely a pair of FFT and iFFT on a double-sized density,
and this is of significant importance in practical simulation, especially when the potential evaluation is called multiple times with the same setups.
Rigorous error estimates were provided for both the nonlocal potential and the density in $d$ dimension.
Extensive numerical results were presented to confirm the improvement in both memory and computational costs.

\section*{Acknowledgements}
Q. Tang was partially supported by the National Natural Science Foundation of China (No. 11971335)  and the Institutional Research Fund from Sichuan University (No. 2020SCUNL110).
X. Liu, S. Zhang and Y. Zhang was partially supported by the National Natural Science Foundation of China (No.12271400).
This work was done while Y. Zhang and S. Zhang were visiting Sichuan University in 2022.

\appendix
%\begin{appendices}
\section{Error estimates of Fourier spectral methods} \label{AppFourier}
\renewcommand{\theequation}{A.\arabic{equation}}
\setcounter{equation}{0}
For simplicity, we will consider $2 \pi$-periodic functions that are defined on torus $\Omega=\mathbb T^d:= [0,2\pi]^d$,
the extension to general domain is quite straightforward with only minor modifications.
Here we use $H_p^m(\Omega)$ to denote the $2\pi$-periodic function space with semi-norm given below
$$|u|_{m,\Omega} = \left(\sum_{ |\bm{\alpha}|=m} \|\partial^{\bm{\alpha}} u \|^2_{2, \Omega }\right)^{1/2}
\sim \left( \sum\limits_{\bk \in \mathbb{Z}^d} |\bk|^{2m}|\widehat{u}_{\bk}|^2 \right)^{1/2}.$$
Define the Fourier and physical index set respectively
\beas
&&\mathcal{I}_N := \left\{ \bk=(k_1, \dots, k_d)  \in \mathbb{Z}^d ~\big| ~ |k_p| \le N, p=1, \dots, d \right\},\\
&&\mathcal{T}_N := \left\{ \bx_{\textbf{j}}=(x_{j_1}, \dots, x_{j_d}) ~\big|~ x_{j_p} = j_p\pi/N, j_p = 0,\ldots, 2N-1 \right\}.
\eeas
The Fourier interpolation series at mesh grid $\mathcal T_N$ reads explicitly as
   \beas
u_{N}(\bx) := \sum_{\bk \in \mathcal{I}_N } \widetilde{u}_{\bk} ~e^{\im \bk \cdot \bx}  \text{ with} ~~
\widetilde{u}_{\bk}= \frac{1}{(2 N)^d c_{\bk}} \sum_{\bx_\textbf{j}\in \mathcal{T}_{N}} u(\bx_{\textbf{j} })e^{-\im \bk \cdot \bx_{\textbf{j}}},
\eeas
where $c_{\bk} : = \prod\limits_{p=1}^{d}c_{p}$ with $c_{p} = 1$ for $|p| < N$, and  $c_{p} = 2$ for $|p| = N$.
The error function $ u-u_{N} $ is characterized by the following lemma, which is an extension of the 1D results in \cite{ShenBook},
\begin{lem} \label{FouApprox}
 For any $u(\bx) \in H^{m}_p(\Omega)$  with $m> d/2$, we have
\beas
\| \partial^{\bm{\alpha}} (u- u_{N}) \|_{\infty,\Omega} & \lesssim & N^{-(m-\frac{d}{2}- |\bm{\alpha}|)}|u|_{m,\Omega} ,  \quad 0\leq |\alpha|\leq m,\\
\| \partial^{\bm{\alpha}}( u- u_{N} )\|_{2,\Omega} & \lesssim & N^{-(m - |\bm{\alpha}|)} |u|_{m,\Omega} ,\quad\quad ~0\leq |\alpha|\leq m.
\eeas

\end{lem}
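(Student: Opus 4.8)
The plan is to prove Lemma \ref{FouApprox} by generalizing the one-dimensional Fourier interpolation error analysis of \cite{ShenBook} to the $d$-dimensional torus, with the aliasing error as the one genuinely new ingredient. The strategy is to split the interpolation error into a \emph{truncation} (projection) error and an \emph{aliasing} error:
\begin{equation*}
u - u_N = (u - P_N u) + (P_N u - u_N),
\end{equation*}
where $P_N u := \sum_{\bk \in \mathcal I_N} \widehat u_{\bk}\, e^{\im \bk\cdot \bx}$ is the truncated Fourier series. For the truncation part, I would expand $u$ in its Fourier series and estimate $\|\partial^{\bm\alpha}(u - P_N u)\|_{2,\Omega}^2 = \sum_{\bk \notin \mathcal I_N} |\bk|^{2|\bm\alpha|}|\widehat u_{\bk}|^2$; inserting and removing a factor $|\bk|^{2m}$ and using that $|\bk| > N$ outside $\mathcal I_N$ gives the bound $N^{-2(m-|\bm\alpha|)} |u|_{m,\Omega}^2$, which is the $L^2$ estimate. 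For the $L^\infty$ estimate, I would use the Sobolev-type embedding via Cauchy--Schwarz: $\|\partial^{\bm\alpha}(u-P_Nu)\|_{\infty,\Omega} \le \sum_{\bk\notin\mathcal I_N} |\bk|^{|\bm\alpha|}|\widehat u_{\bk}|$, split off $|\bk|^{m}$, apply Cauchy--Schwarz, and use that $\sum_{\bk\notin\mathcal I_N} |\bk|^{2(|\bm\alpha|-m)}$ converges precisely when $m - |\bm\alpha| > d/2$ and is bounded by a constant times $N^{-2(m-|\bm\alpha|-d/2)}$ (comparison with the integral $\int_{|\bm\xi|>N} |\bm\xi|^{2(|\bm\alpha|-m)}\,d\bm\xi$ in $d$ dimensions). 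This is where the dimension-dependent shift $d/2$ and the hypothesis $m > d/2$ enter.

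For the aliasing part, I would use the standard identity that the discrete Fourier coefficient picks up all the modes congruent to $\bk$ modulo $2N\mathbb Z^d$: for $\bk \in \mathcal I_N$,
\begin{equation*}
\widetilde u_{\bk} = \frac{1}{c_{\bk}}\sum_{\bm\ell \in \mathbb Z^d} \widehat u_{\bk + 2N\bm\ell},
\end{equation*}
so that $P_N u - u_N$ is controlled by $\sum_{\bk\in\mathcal I_N}\sum_{\bm\ell\neq \bm 0}|\bk + 2N\bm\ell|^{|\bm\alpha|}|\widehat u_{\bk+2N\bm\ell}|$ (the $c_{\bk}$ factors only affect the $|k_p|=N$ boundary faces and are harmless). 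Since every index $\bk + 2N\bm\ell$ with $\bm\ell\neq\bm 0$ has modulus $\gtrsim N$, the same insert/remove-$|\bk|^{m}$ trick plus Cauchy--Schwarz reduces both the $L^2$ and $L^\infty$ aliasing bounds to the same tail sums as in the truncation estimate, giving the identical rates. Finally, for general $\bm\alpha$ with $|\bm\alpha| = k \le m$, I would note that $\partial^{\bm\alpha} u_N$ is itself the Fourier interpolant of... no --- more carefully, $\partial^{\bm\alpha}(u-u_N)$ is handled directly by the estimates above since the bounds already carry the $|\bm\alpha|$ in the exponent; no separate induction on $\bm\alpha$ is needed.

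\textbf{The main obstacle} I anticipate is purely bookkeeping rather than conceptual: controlling the multidimensional lattice tail sums $\sum_{|\bk|>N} |\bk|^{-2s}$ uniformly and tracking the exact power of $N$ that emerges from comparison with a $d$-dimensional integral, together with the mild nuisance of the boundary normalization constants $c_{\bk}$ on the faces $|k_p| = N$ of the cube $\mathcal I_N$. One must also be a little careful that the semi-norm equivalence $|u|_{m,\Omega}^2 \sim \sum_{\bk}|\bk|^{2m}|\widehat u_{\bk}|^2$ (stated in the appendix) is only an equivalence up to constants depending on $d$ and $m$ but \emph{not} on $N$, which is exactly what the $\lesssim$ notation permits. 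Once these lattice-sum estimates are set up cleanly, the $L^2$ and $L^\infty$ bounds for both the truncation and aliasing contributions fall out by the same two lines of Cauchy--Schwarz, and assembling them via the triangle inequality completes the proof.
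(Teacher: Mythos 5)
Your proposal follows essentially the same route as the paper's own proof: the same truncation-plus-aliasing decomposition $u-u_N=(u-P_Nu)+(P_Nu-u_N)$, the same insert-and-remove-$|\bk|^{m}$ Cauchy--Schwarz estimates with lattice tail sums compared against $d$-dimensional integrals, the same aliasing identity $c_{\bk}\widetilde u_{\bk}=\sum_{\bm\ell}\widehat u_{\bk+2N\bm\ell}$ with the $c_{\bk}$ boundary-face correction, and the same direct carrying of $|\bm\alpha|$ through the exponents. Your side remark that the $L^\infty$ tail sum converges only when $m-|\bm\alpha|>d/2$ is a correct observation about a hypothesis the paper itself glosses over when stating the estimate for all $0\le|\bm\alpha|\le m$.
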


\begin{proof}
Let $ P_N u$ be the orthogonal projection of $u$, which is also named as the truncated Fourier series, i.e.,
\beas
(P_N u)(\bx) = \sum\limits_{\bk \in \mathcal{I}_{N}} \widehat{u}_{\bk} e^{\im \bk \cdot \bx} ~~ \text{with} ~~ \widehat{u}_{\bk} = \frac{1}{(2 \pi)^{d}} \int_{\Omega} u(\bx) e^{-\im \bk \cdot \bx}  { \rm d} \bx.
\eeas
Using Cauchy-Schwarz inequality, we obtain
\bea\label{Pn-u}
|(P_N u -u)(\bx)| & \le &  \sum_{ \bk \not \in \mathcal{I}_{N} } |\hat{u}_{\bk}| \le  \sum_{|\bk|>N} |\hat{u}_{\bk}|
\le   \left(\sum_{|\bk|>N} |\bk|^{-2 m}\right)^{1/2} \left( \sum_{|\bk|>N} |\bk|^{2 m} |\hat{u}_{\bk}|^2\right)^{1/2} \notag \\
& \lesssim &  \left( \int_{|\bx| \ge N }  |\bx|^{-2 m} {\rm d} \bx \right)^{1/2} |u|_{m,\Omega}  \lesssim N^{-(m-\frac{d}{2})}   |u|_{m,\Omega}.
\eea
Define the interior of $\mathcal I_N$ as $\mathring{\mathcal{I}}_N:= \left\{ \bk=(k_1, \dots, k_d)  \in \mathbb{Z}^d ~\bigg| ~ |k_p| < N , ~ p=1, \dots, d  \right\}$
and the boundary set $\partial\mathcal{I}_N:=\mathcal{I}_N\setminus\mathring{\mathcal{I}}_N$.
It is clear that
\bea\label{Pn-un}
|(P_N u- u_{N})(\bx)| &\le& \sum_{\bk \in \mathcal{I}_{N} } |\hat{u}_{\bk} - \tilde{u}_{\bk}| = \sum_{ \bk \in \mathring{\mathcal{I}}_N} |\hat{u}_{\bk} - \tilde{u}_{\bk}| + \frac{1}{c_{\bk}}\sum_{\bk \in \partial\mathcal{I}_N }  | c_{\bk}\hat{u}_{\bk} - c_{\bk} \tilde{u}_{\bk}| \notag\\
&\le & \sum_{\bk \in \mathcal{I}_{N} } |\hat{u}_{\bk} -c_{\bk} \tilde{u}_{\bk}| + \sum_{\bk \in \partial\mathcal{I}_N}  (c_{\bk}-1) |\hat{u}_{\bk}|.
\eea
The last summation is bounded and estimated as follows
\beas
\sum_{\bk \in \partial\mathcal{I}_N}  (c_{\bk}-1) |\hat{u}_{\bk}| \lesssim  \sum_{\bk \in \partial\mathcal{I}_N} |\hat{u}_{\bk}| \le  \left(\sum_{\bk \in \partial\mathcal{I}_N}  |\bk|^{-2m}  \right)^{1/2}~ \left( \sum_{\bk \in \partial\mathcal{I}_N}  |\bk|^{2m} |\hat{u}_{\bk}|^2 \right)^{1/2} \lesssim  N^{-(m-\frac{d-1}{2})} |u|_{m,\Omega}.
\eeas
The following identity, connecting interpolation and projection function, holds for any function $u(\bx)\in H^m_p(\Omega)$
$$c_{\bk} \tilde{u}_{\bk}=\sum_{\textbf{s}\in \mathbb{Z}^d }\hat{u}_{\bk+2\textbf{s}N}.$$
The first summation \eqref{Pn-un} is estimated as
\bea\label{nodus}
\sum_{ \bk \in \mathcal{I}_{N}} |\hat{u}_{\bk} -c_{\bk} \tilde{u}_{\bk}| & = & \sum_{\bk \in \mathcal{I}_{N}} \big| \sum_{|\textbf{s}|>0}  \hat{u}_{\bk+2 \textbf{s} N} \big|  \notag\\
&\le& \left( \sum_{ \bk \in \mathcal{I}_{N}} \sum_{|\textbf{s}|>0}|\bk+2 \textbf{s} N |^{-2 m} \right)^{1/2} ~ \left( \sum_{ \bk \in \mathcal{I}_{N}} \sum_{|\textbf{s}|>0}  |\bk+2 \textbf{s} N |^{2 m} |\hat{u}_{\bk+2 \textbf{s} N}|^2 \right)^{1/2} \notag\\
&\lesssim & \left( \sum_{ \bk \in \mathcal{I}_{N}} \sum_{|\textbf{s}|>0}|\bk+2 \textbf{s} N |^{-2 m} \right)^{1/2} ~|u|_{m,\Omega},
\eea
where the double-summation is bounded from above
\beas
\sum_{ \bk \in \mathcal{I}_{N}} \sum_{|\textbf{s}|>0}|\bk+2 \textbf{s} N |^{-2 m}
\lesssim \mathop{\sum_{\textbf{q}\in \mathbb{Z}^d, |\textbf{q}|\ge N}} |\textbf{q}|^{-2m}  \lesssim    \int_{|\bx| \ge {N-1} }  |\bx|^{-2 m} {\rm d} \bx \lesssim  N^{-(2m-d)}.
\eeas
Then, we obtain
\bea
\sum_{ \bk \in \mathcal{I}_{N}} |\hat{u}_{\bk} -c_{\bk} \tilde{u}_{\bk}| \lesssim N^{-(m-\frac{d}{2})}|u|_{m,\Omega}.
\eea
Combine the above estimates and apply triangle inequality, we have
\beas
 | (u-u_{N})(\bx)| \le | (u -P_N u) (\bx)|+  |(P_N u - u_{N})(\bx)| \lesssim  N^{-(m-\frac{d}{2})} |u|_{m,\Omega},
\eeas
which, by taking maximum norm on both sides with respect to $\bx\in \Omega$, immediately leads to
\beas
 \|  u-u_{N} \|_{\infty, \Omega}  =    \sup_{\bx \in \Omega} | (u-u_{N})(\bx)|  \lesssim  N^{-(m-\frac{d}{2})}  |u|_{m,\Omega}.
\eeas

To estimate the $L^2$ norm, first we have
\bea\label{L2_1}
\|P_N u -u\|_{2,\Omega}^2  &= &  \sum_{ \bk \not \in \mathcal{I}_{N} } |\hat{u}_{\bk}|^2 \le  \sum_{|\bk|>N} |\hat{u}_{\bk}|^2 |\bk|^{2 m} |\bk|^{-2 m}     \le N^{-2 m}   |u|^2_{m,\Omega},\\[0.8em]
\label{L2_2}
\| P_N u -u_{N} \|_{2,\Omega}^2 &=& \sum_{\bk \in \mathcal{I}_{N} } |\hat{u}_{\bk} - \tilde{u}_{\bk}|^2 = \sum_{ \bk \in \mathring{\mathcal{I}}_{N}} |\hat{u}_{\bk} - \tilde{u}_{\bk}|^2 + \frac{1}{c^2_{\bk}}\sum_{\bk \in \partial\mathcal{I}_{N}}  | c_{\bk}\hat{u}_{\bk} - c_{\bk} \tilde{u}_{\bk}|^2 \notag \\
&\le & \sum_{\bk \in \mathcal{I}_{N} } |\hat{u}_{\bk} -c_{\bk} \tilde{u}_{\bk}|^2 + \sum_{\bk \in \partial\mathcal{I}_{N}}  (c_{\bk}-1)^2 |\hat{u}_{\bk}|^2  \notag\\
&\lesssim&  \sum_{\bk \in \mathcal{I}_{N} } |\hat{u}_{\bk} -c_{\bk} \tilde{u}_{\bk}|^2+N^{-2m} |u|^2_{m,\Omega}.
\eea
The summation in Eq.\eqref{L2_2} can be estimated by the Cauchy-Schwarz inequality as follows
\bea
\sum_{ \bk \in \mathcal{I}_{N}} |\hat{u}_{\bk} -c_{\bk} \tilde{u}_{\bk}|^2 & = & \sum_{\bk \in \mathcal{I}_{N}} | \sum_{|\textbf{s}|>0}  \hat{u}_{\bk+2 \textbf{s} N} |^2  \le \sum_{ \bk \in \mathcal{I}_{N}} \left(\sum_{|\textbf{s}|>0}|\bk+2 \textbf{s} N |^{-2 m} \right)~ \left( \sum_{|\textbf{s}|>0}  |\bk+2 \textbf{s} N |^{2 m} |\hat{u}_{\bk+2 \textbf{s} N}|^2 \right) \notag \\
&\le & \max_{ \bk \in \mathcal{I}_{N}} \left\{\sum_{|\textbf{s}|>0}|\bk+2 \textbf{s} N |^{-2 m} \right\} \sum_{ \bk \in \mathcal{I}_{N}}\left( \sum_{|\textbf{s}|>0}  |\bk+2 \textbf{s} N |^{2 m} |\hat{u}_{\bk+2 \textbf{s} N}|^2 \right) \notag \\
&\lesssim& \max_{ \bk \in \mathcal{I}_{N}} \left\{\sum_{|\textbf{s}|>0}|\bk+2 \textbf{s} N |^{-2 m} \right\} |u|_{m,\Omega}^2.
\eea
Since $|\bk|/N \leq \sqrt{d}$ holds for any $\bk \in \mathcal I_{N}$, we have
\bea\label{part2}
\sum_{|\textbf{s}|>0}|\bk+2 \textbf{s} N |^{-2 m} &=&(2N)^{-2m}\sum_{|\textbf{s}|>0}|\bk/2N +\textbf{s} |^{-2 m}  \leq C N^{-2m}, \eea
where the constant $C$ is independent of $N$, and it can be proved using the monotonicity argument via a $d$-dimensional integral.
It immediately suggests  $ \| P_N u -u_{N} \|_{2,\Omega}^2 \lesssim N^{-2 m}   |u|^2_{m,\Omega} $.
Therefore, we have
\beas
\| u -u_{N}\|_{2, \Omega}  \le \| u -P_{N} u\|_{2, \Omega} +\| P_{N} u -u_{N}\|_{2, \Omega}     \lesssim N^{-m}   |u|_{m,\Omega}.
\eeas

\

To estimate the spectral approximation error of its derivatives, following similar argument as shown above, we have
\beas
\| \partial^{\bm{\alpha}}(u- u_{N}) \|_{\infty,\Omega} & \lesssim &  \| \partial^{\bm{\alpha}}u- P_N(\partial^{\bm\alpha}u) \|_{\infty,\Omega}
+ \| P_N(\partial^{\bm\alpha}u)-\partial^{\bm{\alpha}}u_{N}) \|_{\infty,\Omega} \\
&\lesssim& N^{-(m-\frac{d}{2}- |\bm{\alpha}|)}|u|_{m,\Omega} ,
\eeas
and
\beas
\| \partial^{\bm{\alpha}}(u- u_{N}) \|_{2,\Omega} & \lesssim &  \| \partial^{\bm{\alpha}}u- P_N(\partial^{\bm\alpha}u) \|_{2,\Omega}
+ \| P_N(\partial^{\bm\alpha}u)-\partial^{\bm{\alpha}}u_{N}) \|_{2,\Omega} \\
&\lesssim& N^{-(m-|\bm{\alpha}|)}|u|_{m,\Omega}.
\eeas
\end{proof}
%\end{appendices}

\renewcommand{\theequation}{B.\arabic{equation}}
\setcounter{equation}{0}
\section{Exact nonlocal potentials} \label{exact}
In this appendix, we present analytical results for the Poisson/Coulomb potential generated by the following radially symmetric non-smooth densities
\be
\rho(\bx)=\left\{\begin{array}{ll}
(1-|\bx|^2)^m,&|\bx| \le 1, \\[0.2em]
0, &|\bx| >1,\\
\end{array}\right.
\ee
where $\bx\in \mathbb R^d$ and $m\in \mathbb Z^{+}$. We denote $r  = |\bx|$ and the total mass of the density by $M(\rho):= \int_{\mathbb R^d} \rho(\bx) {\rm d}\bx$.
It is clear that the corresponding potential is also radially symmetric.

\

\textbf{The $d$-dimensional Poisson potential.} The Poisson potential $\Phi$ satisfies the following equation
\beas
-\Delta \Phi = \rho, \quad \bx \in \mathbb R^d.
\eeas
The potential can be derived by solving an exterior and interior Poisson problem respectively \cite{ExtBd}.
The 3D Poisson potential reads as
\be
\Phi(\bx)=\left\{\begin{array}{ll}
      c + \sum_{j=1}^{m+1} \frac{(-1)^{j}}{(2 j )(2 j + 1)} {\binom{m}{j-1}} r^{2j}, &r \le 1, \\[0.5em]
 M(\rho)\frac{1}{4\pi r}, & r > 1,
\end{array}\right.
\ee
with $c= \frac{M(\rho)}{4\pi} + \sum_{j=1}^{m+1} \frac{(-1)^{j-1}}{(2 j )(2 j + 1)} {\binom{m}{j-1}}$.
While the 2D Poisson potential satisfies the following far-field boundary condition\cite{ExtBd}
\be
\lim_{|\bx| \rightarrow \infty} \left[\Phi(\bx)+ M(\rho)~\frac{1}{2 \pi} \ln(|\bx|)\right]=0.
\ee
and we can derive the exact solution as follows
\be
\Phi(\bx)=\left\{\begin{array}{ll}
c + \sum_{j=1}^{m+1} \frac{(-1)^{j}}{(2 j )^2}{\binom{m}{j-1}}~r^{2j} , &r \le 1, \\[0.5em]
-M(\rho)\frac{1}{2 \pi} \ln(r), &  r > 1,
\end{array}\right.
\ee
where $c= \sum_{j=1}^{m+1} \frac{(-1)^{j-1}}{(2 j )^2}{\binom{m}{j-1}}$ is determined to ensure the continuity condition at $r=1$.
The 1D Poisson potential is given explicitly as
\be
\Phi(\bx)=\left\{\begin{array}{ll}
      c_{1} + c_2 r  + \sum_{j=1}^{m+1} \frac{(-1)^{j}}{(2 j-1)(2 j )} {\binom{m}{j-1}} r^{2j},&r \le 1,  \\[0.5em]
      -M(\rho)\frac{r}{2},  &r > 1,
\end{array}\right.
\ee
where $c_1 = \int_{\mathbb R} - \rho(x) \frac{1}{2}|x|{\rm d} x$ and $c_{2}= -M(\rho)/2 -c_1 -  \sum_{j=1}^{m+1} \frac{(-1)^{j}}{(2 j-1)(2 j )} {\binom{m}{j-1}} = 0$ .

\

\textbf{The 2D Coulomb potential corresponds to $U(\bx) = \frac{1}{2\pi |\bx|}$.}
For the 2D Coulomb potential, computing its Fourier integral, we obtain
\beas
\Phi(\bx) = \frac{1}{2 \pi} \int_0^{\infty} \widehat{U}(k) \widehat{\rho}(k) k J_0 (k r) ~
{\rm d}~ k, \quad r = |\bx|,
\eeas
and it can be integrated exactly. Here below we just present analytic results for some fixed $m\in \mathbb Z^{+}$
and the rest can be derived using mathematical software, like Mathematica.

For $m=2$, the 2D Coulomb potential reads as
\be
\Phi(\bx)=\left\{\begin{array}{ll}
      \frac{16  }{225 \pi r} \left[ r^2 p_0(r) E(\frac{1}{r^2})+ p_1(r) K(\frac{1}{r^2})\right] ,&r \leq 1, \\[0.5em]
  %\frac{128}{225 \pi} , & r=1,\\[0.6em]
\frac{16  }{225 \pi } \left[p_0(r) E(r^2)+p_2(r) K (r^2)\right], &r>1,
\end{array}\right.
\ee
where $p_0(r) = 23-23 r^2 + 8 r^4, ~p_1(r)=15-34 r^2+27 r^4-8 r^6,~p_2(r)=-4 (2-3 r^2+r^4) $, and
$K(r),E(r)$ are the complete elliptic integrals of first and second kind respectively\cite{handbook}.
For $m=3$, it is given explicitly as
\be
\Phi(\bx)=\left\{\begin{array}{ll}
-\frac{32  }{3675 \pi r} \left[r^2 l_0(r)E(\frac{1}{r^2})+ l_1(r) K(\frac{1}{r^2})\right] ,& r\leq 1, \\[0.5em]
  %-\frac{512}{1225 \pi} , & r=1,\\[0.6em]
-\frac{32 }{525 \pi } \left[l_0(r)E(r^2)+l_2(r) K (r^2)\right], &r>1, \\
\end{array}\right.
\ee
with $l_0(r)=8 (-22+33 r^2 -23 r^4+6 r^6) $, $l_1(r)=71-142 r^2+95 r^4-24 r^6$ and $l_2(r)= -105+298 r^2-353r^4+208 r^6-48r^8$.

%\end{appendices}

\end{document}